\documentclass[11pt, oneside]{article}   	
\usepackage{geometry}                		
\usepackage{graphicx}				
\usepackage{amssymb}
\usepackage{amsmath}
\usepackage{amsthm}
\usepackage{harpoon}
\usepackage{accents}

\usepackage{tikz}  
\usepackage{float}
\restylefloat{table}
\usepackage{mathrsfs}
\usepackage{verbatim}
\usepackage{enumitem}  

\usetikzlibrary{positioning,chains,fit,shapes,calc}  

\newtheorem{theorem}{Theorem}
\newtheorem{lemma}{Lemma}
\newtheorem{definition}{Definition}

\newtheorem{remark}{Remark}
\newtheorem{corollary}{Corollary}

\newcommand{\ppp}{\[\begin{aligned}}
\newcommand{\ooo}{\end{aligned}\]}
\newcommand{\err}{\mathrm{err}}
\newcommand{\rad}{\mathrm{rad}}
\newcommand{\val}{\mathrm{val}}
\newcommand{\ind}{\mathrm{ind}}
\newcommand{\mrm}[1]{\mathrm{#1}}

\begin{document}
\title{On recurrence relations arising from NRS(2) applied to a cubic polynomial}
\author{Mario DeFranco}

\maketitle

\abstract{We prove that the leading coefficient of the ``error" terms of NRS(2) applied to a cubic polynomial $f(z)$ with starting point $(-\frac{a_1}{a_2}, -\frac{a_1}{a_2})$ are positive-coefficient rational functions in the zeros of $f(z)$. We express these terms as a sum over combinatorial objects which we call radius-value trees.}

\section{Introduction} 

Let $f(z) \in \mathbb{C}[z]$ be a degree $d$ polynomial 
\[
f(z) = \sum_{i=0}^d a_i z^i = \prod_{i=1}^d (1-u_iz)
\]
where we assume $a_0 = 1$. We consider the iterations of NRS(2) applied to $f(z)$ for $d=3$ in terms of the reciprocal zeros $u_i$. We express the difference between the $n$-th iteration and the auxiliary-function zero
\[
(\alpha_0, \alpha_1)
\]
(see \cite{DeFranco 1} and \cite{DeFranco 2}) using polynomials in $u_1$ and $u_2$. In particular, we focus on the leading coefficient of $u_3$ in these polynomials, which is a polynomial in $u_1$ and $u_2$. We obtain recurrence relations for these leading coefficients in section \ref{rr}. 

Our main results are in section \ref{pos}. Theorem \ref{t E RV} proves that these leading coefficients can be expressed as a sum over objects we call radius-value trees, defined in section \ref{rv trees}. Theorem \ref{t partition} and Corollary \ref{c partition} prove that this sum has all positive terms. We express these results using the ring $\mrm{CH}_2$ of complete homogeneous polynomials in $u_1$ and $u_2$, and also the ring $\tilde{\mrm{CH}}_2$, defined in section \ref{ch}. 

\section{NRS(2) applied to cubic polynomials} \label{rr}
Let $f(z) \in \mathbb{C}[z]$ be a degree $d$ polynomial 
\[
f(z) = \sum_{i=0}^d a_i z^i = \prod_{i=1}^d (1-u_iz)
\]
where we assume $a_0 = 1$.
Recall that the auxiliary functions for NRS(2) applied to $f(z)$ are

\begin{align*}
f_0(x_0,x_1) &= -\sum_{i=-1}^{d-1}\sum_{j=0}^{\lfloor \frac{i}{2} \rfloor}  -\frac{a_{i+1}}{a_2}((\frac{-a_0}{a_2})(\frac{-a_1}{a_2})^{-1} x_1)^{j} x_0^{i-2j}{i-j \choose j}\\ 
f_1(x_0, x_1) &= -\sum_{i=-2}^{d-2}\sum_{j=-1}^{\lfloor \frac{i}{2} \rfloor}  -\frac{a_{i+2}}{a_2}((\frac{-a_0}{a_2})(\frac{-a_1}{a_2})^{-1} x_1)^{j} x_0^{i-2j}x_1{i-j \choose j}.
\end{align*}
When $d=3$, these become 
\begin{align*}
f_0(x_0, x_1) &= x_0+\frac{a_1}{a_2}+ x_0^2\frac{a_3}{a_2} + x_1\frac{a_0 a_3}{a_1a_2} \\ 
f_1(x_0, x_1) &=x_1 + \frac{a_1}{a_2} + x_0 x_1 \frac{a_3}{a_2}.
\end{align*}

Let 
\[
(c_{n,0}, c_{n,1})
\]
be the output of the $n$-th iteration of NRS(2), i.e. 
\begin{equation} \label{NRS2 rr}
(c_{n+1,0}, c_{n+1,1}) = (c_{n,0}, c_{n,1}) - J_n^{-1}(f_0(c_{n,0}, c_{n,1}), f_1(c_{n,0}, c_{n,1}))
\end{equation}
where $J_n$ denotes the Jacobian matrix of $f_0$ and $f_1$ evaluated at 
\[
(x_0, x_1) = (c_{n,0}, c_{n,1}).
\]
Define polynomials $\err(n,0), \err(n,1), \err(n,-1)$ in $u_1$ and $u_2$ by
\begin{align}
\frac{\err(n,0)}{\err(n,-1)} &= \frac{u_1+u_2}{u_1u_2} - c_{n,0} \label{errfrac 0}\\ 
\frac{\err(n,1)}{\err(n,-1)} &= \frac{u_1+u_2+u_3}{u_1u_2} - c_{n,1}.\label{errfrac 1} 
\end{align}

Then we may express the recurrence relations \eqref{NRS2 rr} for NRS(2) as 
\begin{equation} \label{orr}
\underline{\text{Original recurrence relations}}
\end{equation}

\begin{align*} 
\err(0,0) &= (u_1^2 + u_1u_2+ u_2^2)u_3\\ 
\err(0,1) &= (u_1 + u_2)u_3(u_1+u_2+u_3) \\ 
\err(0,-1) &= u_1u_2(u_1u_2+u_1u_3+u_2u_3)
\end{align*} 

\begin{align*} 
\err(n+1,0) &=-u_1 u_2 u_3^2 \err(n,0)\err(n,-1) \err(n,1)+ 
  u_1 u_2 u_3 (u_1+u_2+u_3)\err(n,0)^2\err(n,-1) \\ 
  &\,\,\,\,\,\,+   u_1 u_2 u_3^2 (u_1+u_2+u_3)\err(n, 0)^3\\ 
\err(n+1,1) &=u_3 (u_1+u_2+u_3) (u_1 u_2 - u_1 u_3 - u_2 u_3) \err(n,0) \err(n,1) \err(n,-1) \\ 
&\,\,\,\,\,\,+  
 u_3^2(u_1+u_2+u_3)^2 \err(n,0)^2 \err(n,-1) \\ 
 & \,\,\,\,\,\,+ 
 u_1 u_2 u_3^2 (u_1+u_2+u_3) \err(n,0)^2 \err(n,1) \\ 
\err(n+1,-1) &= \err(n,-1) ((u_1 - u_3) (u_2 - u_3) (u_1+u_2+u_3) \err(n,-1)^2 - u_1 u_2 u_3^2 \err(n,1) \err(n,-1)\\ 
&  \,\,\,\,\,\, + 
   u_3(u_1+u_2+u_3) (3 u_1 u_2 - u_1 u_3 - u_2 u_3) \err(n,0) \err(n,-1) \\ 
   & \,\,\,\,\,\,+ 2 u_1 u_2 u_3^2 (u_1+u_2+u_3)\err(n,0)^2
\end{align*} 

We apply some changes of variables in order to simplify these relations. First, for each $n$
\[
\err(n,1) \mapsto \err(n,1) (u_1+u_2+u_3).
\]
After this change, we may factor out $(u_1+u_2+u_3)$ from $\err(n+1,0), \err(n+1,1)$ and $\err(n,-1)$ which cancel after making the fractions \eqref{errfrac 0} and \eqref{errfrac 1}. Then make
\[
u_3 \mapsto u_1u_2u_3
\]
and then 
\begin{align*}
\err(n,0) &\mapsto (u_1u_2)^{\frac{5(3^n)-3}{2}}\err(n,0) \\ 
\err(n,1) &\mapsto (u_1u_2)^{\frac{5(3^n)-3}{2}}\err(n,1)\\ 
\err(n,-1) &\mapsto (u_1u_2)^{\frac{5(3^n)-1}{2}}\err(n,-1)  
\end{align*}
and then 
\begin{align*}
\err(n,0) &\mapsto u_3^{2^{n+1}-1}\err(n,0) \\ 
\err(n,1) &\mapsto u_3^{2^{n+1}-1}\err(n,1).
\end{align*}

These changes yield the
\begin{equation} \label{mrr}
\underline{\text{Modified recurrence relations}}
\end{equation}

\begin{align*} 
\err'(0,0) &= u_1^2 + u_1u_2+ u_2^2\\ 
\err'(0,1) &= u_1 + u_2\\ 
\err'(0,-1) &= 1+ u_3(u_1+u_2)
\end{align*} 

\begin{align*} 
\err'(n+1,0) &=\err'(n,0) (\err'(n,-1)\err'(n,0) - \err'(n,-1)\err'(n,1) u_1 u_2 u_3 +
    \err'(n,0)^2 u_3^{2^{1 + n}})\\ 
\err'(n+1,1) &=\err'(n,0) (\err'(n,-1)\err'(n,1) + \err'(n,-1)\err'(n,0) u_3 - 
   \err'(n,-1)\err'(n,1) (u_1 +u_2)u_3 \\ 
   & \,\,\,\,\,\,+ 
   \err'(n,0) \err'(n,1) u_3^{2^{1 + n}})\\ 
\err'(n+1,-1) &= \err'(n,-1)(\err'(n,-1)^2 - \err'(n,-1)^2 (u_1+u_2) u_3 
 + \err'(n,-1)^2 u_1 u_2 u_3^2 \\ 
   &\,\,\,\,\,\,+ 
   3 \err'(n,-1)\err'(n,0) u_3^{2^{1 + n}} + 
   2 \err'(n,0)^2 u_3^{2^{2 + n}} - 
   \err'(n,-1)\err'(n,0) (u_1 +u_2)u_3^{1 + 2^{1 + n}} \\ 
   &\,\,\,\,\,\, - \err'(n,-1)\err'(n,1) u_1 u_2 u_3^{1 + 2^{1 + n}})\\
\end{align*} 

 \begin{lemma} \label{l u3 deg} 
As polynomials in $u_3$, assuming $u_1$ and $u_2$ to be indeterminates, we have the degrees 
\begin{align*}
\deg_{u_3}(\err'(n,-1) )&= 2(3^n)-1\\ 
\deg_{u_3}(\err'(n,0) )&= 2(3^n - 2^n)\\
\deg_{u_3}(\err'(n,1) )&= 2(3^n - 2^n) 
\end{align*}
 \end{lemma}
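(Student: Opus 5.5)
We argue by induction on $n$, keeping track of the leading coefficients in $u_3$ as well as the degrees. Write $D_n = 2(3^n)-1$ and $E_n = 2(3^n-2^n)$. The base case $n=0$ is read off from the initial values: $\deg_{u_3}\err'(0,-1)=1=D_0$, and $\deg_{u_3}\err'(0,0)=\deg_{u_3}\err'(0,1)=0=E_0$.

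\textbf{Step 1: the upper bound.} Assuming the three degrees at stage $n$, we bound the $u_3$-degree of every monomial term in the modified recurrence relations \eqref{mrr}. The identities $D_n+1 = 2(3^n)$ and $E_n+2^{n+1} = 2(3^n)-2^{n+1}$ give the following.
\begin{itemize}
\item In $\err'(n+1,0)$, the terms $-\err'(n,-1)\err'(n,1)u_1u_2u_3$ and $\err'(n,0)^2u_3^{2^{n+1}}$ both have degree at most $D_n+2E_n+1$. The first term $\err'(n,-1)\err'(n,0)$ is strictly smaller. Since $D_n+2E_n+1 = 6(3^n)-4(2^n) = E_{n+1}$, this is the claimed bound.
\item In $\err'(n+1,1)$, the terms carrying an extra $u_3$ or $u_3^{2^{n+1}}$ all have degree at most $D_n+2E_n+1 = E_{n+1}$. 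The first term $\err'(n,-1)\err'(n,1)$ is smaller.
\item In $\err'(n+1,-1)$, the terms with coefficients $u_1u_2u_3^2$, $2u_3^{2^{n+2}}$, $-(u_1+u_2)u_3^{1+2^{n+1}}$ and $-u_1u_2u_3^{1+2^{n+1}}$ all have degree at most $3D_n+2 = D_{n+1}$. The remaining terms have lower degree.
\end{itemize}
So the inequalities $\le$ propagate. Because several top-degree contributions coincide, equality is not automatic and cancellation must be ruled out.

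\textbf{Step 2: the lower bound via specialization.} The recurrence relations \eqref{mrr} are polynomial identities, so they remain valid after the specialization $u_1=1,\ u_2=0$. Specialization can only lower $u_3$-degrees. It therefore suffices to show that the specialized polynomials have exactly the degrees $D_n, E_n, E_n$, with nonzero leading coefficients $\lambda_{-1}(n),\lambda_0(n),\lambda_1(n)$.

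Under this specialization $u_1u_2=0$ and $u_1+u_2=1$. Collecting the top-degree terms identified in Step 1 gives the recursions
\begin{align*}
\lambda_0(n+1) &= \lambda_0(n)^3,\\
\lambda_1(n+1) &= \lambda_0(n)\bigl(\lambda_{-1}(n)\lambda_0(n) - \lambda_{-1}(n)\lambda_1(n) + \lambda_0(n)\lambda_1(n)\bigr),\\
\lambda_{-1}(n+1) &= \lambda_{-1}(n)\bigl(2\lambda_0(n)^2 - \lambda_{-1}(n)\lambda_0(n)\bigr).
\end{align*}
These hold provided the specialized degrees at stage $n$ are exactly as claimed, which is the inductive hypothesis. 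The initial values give $\lambda_{-1}(0)=\lambda_0(0)=\lambda_1(0)=1$. By induction all three leading coefficients stay equal to $1$, since $1$, $1+1-1$ and $2-1$ all equal $1$. Hence the specialized degrees are exactly $D_{n+1}, E_{n+1}, E_{n+1}$, completing the induction.

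\textbf{Conclusion and main obstacle.} Combining Steps 1 and 2, the generic degrees are squeezed between the specialized degrees and the upper bounds, which proves the lemma.

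The main obstacle is the tie among the top-degree terms: in each relation two or more terms attain the maximal degree, so a naive degree count cannot exclude cancellation. Choosing the specialization $u_1=1,\ u_2=0$ makes the leading-coefficient recursion trivially solvable with the constant solution $1$. The only care needed is the bookkeeping that verifies which terms attain the maximum, namely the identities $D_n+1=2(3^n)$ and $E_n+2^{n+1}+E_n = D_n+E_n+1$.
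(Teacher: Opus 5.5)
Your proof is correct and follows the paper's approach, which is induction on $n$. You also supply the nonvanishing of the top-degree coefficients via the specialization $u_1=1,\ u_2=0$; the paper's one-line ``straightforward induction'' leaves that step implicit.

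One small slip: in Step 1 the identity should read $E_n+2^{n+1}=2(3^n)=D_n+1$, not $2(3^n)-2^{n+1}$. You use the correct form, $2E_n+2^{n+1}=D_n+E_n+1$, everywhere it actually matters.
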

\begin{proof}
Straightforward using induction on $n$. This completes the proof.  
\end{proof}

Denote by $\err_0(n,i)$ the leading coefficient of $\err'(n,i)$ with respect to $u_3$, for $i=-1,0,1$. We then obtain 
\begin{equation} \label{mlcr}
\underline{\text{Modified leading coefficient relations}}
\end{equation}
\begin{align*} 
\err_0(0,0) &= u_1^2 + u_1u_2+ u_2^2\\ 
\err_0(0,1) &= u_1 + u_2\\ 
\err_0(0,-1) &= u_1+u_2
\end{align*}
 
 \begin{align*} 
\err_0(n+1,0) &=(\err_0(n,0)^2 -u_1u_2\err_0(n,-1) \err_0(n,1)) \err_0(n,0)\\ 
\err_0(n+1,1) &=\err_0(n,0)^2(\err_0(n,-1)+ \err_0(n,1) ) - 
   (u_1+u_2)\err_0(n,-1)\err_0(n,1) \err_0(n,0)\\ 
\err_0(n+1,-1) &= \err_0(n,-1)(u_1u_2(\err_0(n,-1)^2 - \err_0(n,-1)\err_0(n,1))\\
&\,\,\,\,\,\,+ 2 \err_0(n,0)^2 - (u_1+u_2)\err_0(n,0)\err_0(n,-1))
\end{align*}
 
\begin{lemma} \label{l mlcre}
For all $n$ 
\[
\err_0(n,-1) = \err_0(n,1)
\]
and thus the modified leading coefficient relations \eqref{mlcr} are equivalent to 
\begin{align*} 
\err_0(0,0) &= u_1^2 + u_1u_2+ u_2^2\\ 
\err_0(0,1) &= u_1 + u_2\\ 
\end{align*}
\begin{align*} 
\err_0(n+1,0) &=(\err_0(n,0)^2 -u_1u_2 \err_0(n,1)^2) \err_0(n,0)\\ 
\err_0(n+1,1) &=(2\err_0(n,0)^2 - 
   (u_1+u_2)\err_0(n,1) \err_0(n,0)) \err_0(n,1).\\ 
\end{align*}
\end{lemma}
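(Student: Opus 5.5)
We argue by induction on $n$, proving $\err_0(n,-1)=\err_0(n,1)$ directly from the modified leading coefficient relations \eqref{mlcr}. The base case is immediate, since both $\err_0(0,-1)$ and $\err_0(0,1)$ equal $u_1+u_2$.

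For the inductive step, write $A=\err_0(n,0)$ and $B=\err_0(n,1)=\err_0(n,-1)$, using the induction hypothesis. Substituting into the relation for $\err_0(n+1,1)$ gives
\[
\err_0(n+1,1)=A^2(B+B)-(u_1+u_2)B^2A=\bigl(2A^2-(u_1+u_2)AB\bigr)B .
\]
Substituting into the relation for $\err_0(n+1,-1)$, the term $u_1u_2(B^2-B\cdot B)$ vanishes, leaving
\[
\err_0(n+1,-1)=B\bigl(2A^2-(u_1+u_2)AB\bigr).
\]
These two expressions agree, which completes the induction.

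The simplified recurrences then follow from the same substitution. The formula for $\err_0(n+1,1)$ is the one just displayed. For $\err_0(n+1,0)$, replacing $\err_0(n,-1)\err_0(n,1)$ by $\err_0(n,1)^2$ gives $(A^2-u_1u_2B^2)A$. The equivalence claimed in the lemma holds because the $-1$ sequence is now determined as a copy of the $1$ sequence, so it can be dropped from the system.

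There is no real obstacle in this argument, since it is only a direct substitution. The one point to check is that the relations \eqref{mlcr} were transcribed correctly from the leading coefficients of the modified recurrences \eqref{mrr}, using the degree counts of Lemma \ref{l u3 deg}. In particular, one must confirm which terms attain the top $u_3$-degree so that the displayed leading coefficient relations are valid. We take \eqref{mlcr} as given from the text.
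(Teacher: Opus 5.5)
Your proof is correct and follows the paper's own argument. Both induct on $n$ and substitute $\err_0(n,-1)=\err_0(n,1)$ into \eqref{mlcr}; the $u_1u_2$ term in the $\err_0(n+1,-1)$ relation then cancels, so the $-1$ and $1$ recurrences both give $B\bigl(2A^2-(u_1+u_2)AB\bigr)$, and, like the paper, you take \eqref{mlcr} as given.
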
 
 \begin{proof}
 Straightforward induction on $n$. Setting $\err_0(n,-1) = \err_0(n,1)$ reduces the relations \eqref{mlcr} to those of the lemma. This completes the proof.  
 \end{proof}

\section{The rings $\mathrm{CH}_2$ and $\tilde{\mathrm{CH}}_2$} \label{ch}
Let $\mathrm{CH}_2$ denote the ring generated by complete homogeneous polynomials $h_k$ in two indeterminates $u_1$ and $u_2$, where 
\[
h_k = \sum_{i=0}^k u_1^{k-i} u_2^i.
\]

We describe how to multiply $h_i$ and $h_j$ in terms of centered paths defined next.  
\begin{definition} 
For an integer $r \geq 0$, let $P$ be path on the set $V(P)$ of $r+1$ vertices indexed as
\[
V(P) = (P(-r), P(-r+2), \ldots, P(r-2), P(r))
\] 
equipped with directed edges 
\[
P(-r+2i) \rightarrow P(-r+2i+2)
\]
for $0\leq  i < r$. We call $P$ a \emph{centered path} and call $r$ the \emph{radius of $P$} denoted by $\mathrm{rad}(P)$. If $r$ is even, then call $P(0)$ the central vertex of the path. 

Suppose $P_1$ and $P_2$ are centered paths with $r_i = \mathrm{rad}(P_i)$ for $i=1,2$. For $0 \leq k \leq \min(r_1,r_2)$, define the $k$-th product path 
\[
\pi(P_1,P_2, k)
\]
whose vertex set is a subset of $V(P_1) \times V(P_2)$ with edges defined as follows. For $0 \leq j \leq r_2 - k -1$
\[
(P_1(-r_1+ 2k), P_2(-r_2+2j)) \rightarrow (P_1(-r_1+ 2k), P_2(-r_2+2j+2)), 
\] 
and for $1 \leq j \leq r_1 - k -1$
\[
( P_1(-r_1+ 2k+2j), P_2(r_2-2k)) \rightarrow (P_1(-r_1+ 2k+2j+2), P_2(r_2-2k)). 
\] 

$\pi(P_1,P_2,k)$ 
For integers $r_1, r_2 \geq 0$ and $x,y$ define 
\[
\rho(r_1, r_2, x, y) = \begin{cases} 
r_1+y \text{ if } y \geq -x+r_2-r_1 \\ 
r_2 - x \text{ if }  y< -x+r_2-r_1
\end{cases}. 
\]
\end{definition}

\begin{lemma} \label{l cp facts}

i).
\[
\mathrm{rad}(\pi(P_1,P_2,k)) = \mathrm{rad}(P_1)+\mathrm{rad}(P_2) - 2k
\]

ii). The radius of the centered path that contains $(P_1(x), P_2(y))$ is 
\[
\rho(\mathrm{rad}(P_1), \mathrm{rad}(P_2), x, y).
\]

iii). We have
\[
\bigcup_{k=0}^{\min(r_1,r_2)}  V(\pi(P_1,P_2, k)) = V(P_1) \times V(P_2)
\]
and the above union is disjoint.

iv). Suppose 
\[
(P_1(x), P_2(y)) = \pi(P_1, P_2 ,k)(j).
\]
Then 
\[
j = x+y.
\]
\end{lemma}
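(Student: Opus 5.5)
The plan is to pass from the coordinates $(x,y)$ of a vertex $(P_1(x),P_2(y))$ to nonnegative grid coordinates, in which each product path $\pi(P_1,P_2,k)$ becomes an L-shaped ``hook''. Then all four parts reduce to elementary bookkeeping. Write $r_1=\mathrm{rad}(P_1)$, $r_2=\mathrm{rad}(P_2)$, and set
\[
a=\tfrac{x+r_1}{2}\in\{0,\dots,r_1\},\qquad b=\tfrac{y+r_2}{2}\in\{0,\dots,r_2\},
\]
so that $V(P_1)\times V(P_2)$ is identified with the grid $\{0,\dots,r_1\}\times\{0,\dots,r_2\}$.

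Reading off the two families of edges in the definition, $\pi(P_1,P_2,k)$ starts at $(a,b)=(k,0)$. It moves up the column $a=k$ to $(k,r_2-k)$, and then moves right along the row $b=r_2-k$ to $(r_1,r_2-k)$. I will first check that the last vertex of the first segment coincides with the first vertex of the second segment; this is why the second family of edges starts at $j=1$. The vertex set is therefore the hook
\[
H_k=\{(k,b):0\le b\le r_2-k\}\cup\{(a,r_2-k):k\le a\le r_1\}.
\]
Counting vertices gives $(r_2-k+1)+(r_1-k)=r_1+r_2-2k+1$. Since a centered path of radius $r$ has $r+1$ vertices, this proves (i).

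For (iii), I will show that a grid point $(a,b)$ lies in $H_k$ exactly when $k=\min(a,\,r_2-b)$. Indeed, if $a\le r_2-b$, the point lies in the vertical leg of $H_a$. If $a\ge r_2-b$, it lies in the horizontal leg of $H_{r_2-b}$. When $a=r_2-b$ the point is the corner of that single hook, so no double counting occurs. Conversely, membership in $H_k$ forces either $a=k\le r_2-b$ or $b=r_2-k\le\dots$, i.e.\ $r_2-b=k\le a$, so in both cases $k=\min(a,r_2-b)$. Since $\min(a,r_2-b)$ ranges over $0,\dots,\min(r_1,r_2)$, the hooks cover the grid, and they are disjoint because $k$ is determined by $(a,b)$.

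Part (ii) then follows by substituting $k=\min(a,r_2-b)$ into (i). If $a\le r_2-b$, the radius is $r_1+r_2-2a=r_2-x$. Otherwise it is $r_1+r_2-2(r_2-b)=r_1+y$. The condition $a> r_2-b$ translates to $y>-x+r_2-r_1$. On the boundary $y=-x+r_2-r_1$ both formulas agree, which matches the $\geq$ convention in the definition of $\rho$.

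For (iv), the first vertex has $x+y=(-r_1+2k)+(-r_2)=-(r_1+r_2-2k)=-\mathrm{rad}(\pi)$, which is the index of the first vertex of a centered path. Each edge increases exactly one of $x,y$ by $2$, so $x+y$ increases by $2$ along each edge, exactly as the index does. Hence the vertex at index $j$ satisfies $j=x+y$. I expect no real obstacle here. The only care needed is matching the index ranges at the corner of the hook and handling the tie case in $\rho$.
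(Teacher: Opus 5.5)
Your proposal is correct and is the paper's argument. The paper also identifies $(P_1(x),P_2(y))$ with the point $(x,y)$ and observes that $\pi(P_1,P_2,k)$ is the L-shaped hook with corner on the line $y=-x+r_2-r_1$, vertical leg below the corner and horizontal leg to its right. Your rescaled coordinates $(a,b)$ simply make the bookkeeping explicit, and they avoid the paper's slip of writing the corner as $(-r_1+k,\,r_2-k)$ instead of $(-r_1+2k,\,r_2-2k)$.

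One correction concerns your explanation of why the second family of edges starts at $j=1$; it is backwards. With $j\ge 1$, the first horizontal edge in your coordinates is $(k+1,r_2-k)\to(k+2,r_2-k)$, so the check you promise would fail, and the corner edge $(k,r_2-k)\to(k+1,r_2-k)$ is missing. The range should read $0\le j\le r_1-k-1$; this is a typo in the definition. You need that corner edge for $\pi(P_1,P_2,k)$ to be a single path, and in particular for the index count in (iv).
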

\begin{proof} 
These statements follow immediately by associating $(P_1(x), P_2(y)) \in V(P_1) \times V(P_2)$ with the point $(x,y)$ in the $x-y$ plane and considering the line 
\[
y = -x-r_1+r_2.
\]
For each point on this line 
\[
(-r_1+k, r_2-k)
\]  
the set consisting of this point, the points on the vertical ray below it and all the points on the horizontal ray to the right constitute the vertex set of $\pi(P_1,P_2, k)$. This completes the proof. 
 \end{proof}

Lemma \ref{l cp facts} iii implies that 
\[
h_i h_j = \sum_{k=0}^{(j+i -|j-i|)/2}(u_1 u_2)^{(j+i - |j-i|-2k)/2}h_{|j-i|+2k}. 
\]
Since the power of $u_1 u_2$ is determined by $i,j$ and $k$, we may in the following suppress the notational dependence on these factors of $u_1u_2$, remembering that $h_i$ in an expression of total $(u_1,u_2)$ degree $t$ now means 
\[
(u_1 u_2)^{(t - i)/2} \sum_{k=0}^i u_1^k u_2^{i-k}
\]
where $t \equiv i \mod 2$ always. Thus we define the $\mrm{CH}_2$ as follows. 

\begin{definition} \label{d M rings}

Let $\mrm{CH}_2$ denote the ring generated by the elements $\{ h_i \colon i \in \mathbb{Z}, i \geq 0\}$ with the relations 
\[
h_i h_j = \sum_{k=0}^{(j+i -|j-i|)/2}h_{|j-i|+2k}. 
\]
\end{definition}

\begin{definition} 
Define 
\begin{align*} 
\tau \colon \mathbb{Z} &\rightarrow \mathbb{Z} \\ 
i &\mapsto |i+1|-1.
\end{align*}

Let $\tilde{\mrm{CH}}_2$ denote the ring generated by the elements $\{ \tilde{h}_i \colon i \in \mathbb{Z}\}$ with the relations 
\[
\tilde{h}_i \tilde{h}_j = 
\begin{cases} 
0 &\text{ if } i = -1 \\
\sum_{k=0}^i \tilde{h}_{j-i+2k} &\text{ if } i \geq 0\\
-\tilde{h}_{\tau(i)}\tilde{h}_j &\text{ if } i <-1
\end{cases}
\]
\end{definition} 

\begin{remark} 
These multiplication rules are associative as seen by checking them on generators. Thus the rings $\mrm{CH}_2$ and $\tilde{\mrm{CH}}_2$ are well-defined. 
\end{remark}

\begin{definition} 
Let $S_k$ denote the linear operator 

\begin{align*}
S_k\colon \tilde{\mrm{CH}}_2 &\rightarrow \tilde{\mrm{CH}}_2 \\ 
\tilde{h_i} &\mapsto \tilde{h}_{i+k}
\end{align*}

Let $L$ denote the ring homomorphism 
\begin{align*}
L \colon \tilde{\mrm{CH}}_2 &\rightarrow \mrm{CH}_2 \\
\tilde{h}_i &\mapsto \mathrm{sgn}(i+1)h_{\tau(i)}
\end{align*}
\end{definition} 

\begin{remark}
That $L$ is a homomorphism is straightforwardly checked by its action on generators. The kernel $\mathrm{Ker}(L)$ is generated by $\tilde{h}_{-1}$ and the elements $\tilde{h}_{-i}+ \tilde{h}_{i-2}$. 
\end{remark}

\begin{lemma} \label{l T g action}
For $g_1, g_2 \in \tilde{\mrm{CH}}_2$, suppose $g_1 \in \mathrm{Ker}(L)$. Then 
\[
g_1g_2 =0.
\]  
\end{lemma}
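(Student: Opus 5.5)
Since multiplication in $\tilde{\mrm{CH}}_2$ is bilinear, it suffices to prove $g_1\tilde{h}_j=0$ for every generator $\tilde{h}_j$, $j\in\mathbb{Z}$. Products of generators expand, by the defining relations, into linear combinations of generators, so this covers all $g_2$. By the Remark, $\mathrm{Ker}(L)$ is the ideal generated by $\tilde{h}_{-1}$ and the elements $\tilde{h}_{-i}+\tilde{h}_{i-2}$. Elements of this ideal are sums of products of these generators with arbitrary ring elements. By associativity (the preceding Remark) it therefore suffices to show that each listed kernel generator, multiplied by an arbitrary $\tilde{h}_j$, gives $0$. The ideal element $x\cdot(\text{generator})\cdot \tilde h_j$ then vanishes as well. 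I would also record that kernel elements are, as vectors, linear combinations of $\tilde{h}_{-1}$ and the $\tilde{h}_{-i}+\tilde{h}_{i-2}$. This uses $L(\tilde h_i)=\mathrm{sgn}(i+1)h_{\tau(i)}$: the map pairs $\tilde h_{-i}$ with $\tilde h_{i-2}$, sends each pair to $\pm h_{i-2}$ with opposite signs, and kills $\tilde h_{-1}$. This gives a linear-algebra description of the kernel and avoids any subtlety about ideals.

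The key computations are then the following. First, $\tilde{h}_{-1}\tilde{h}_j=0$ directly by the first case of the multiplication rule. Second, for $i\ge 2$, consider $(\tilde{h}_{-i}+\tilde{h}_{i-2})\tilde{h}_j$. Since $-i<-1$, the third case gives $\tilde{h}_{-i}\tilde{h}_j=-\tilde{h}_{\tau(-i)}\tilde{h}_j$. Here $\tau(-i)=|-i+1|-1=i-2$. Hence $\tilde{h}_{-i}\tilde{h}_j+\tilde{h}_{i-2}\tilde{h}_j=0$. (For $i=1$ the element is $2\tilde h_{-1}$, already handled; for $i\le 0$ the element is the same as the one for $2-i\ge 2$.) With the linear description of the kernel and bilinearity, this gives $g_1\tilde h_j=0$, hence $g_1g_2=0$.

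The main obstacle is making sure the multiplication is used on the correct side. The relations are stated with the special index on the left factor. Since $g_1$ appears on the left in $g_1g_2$, this matches exactly, and no commutativity is needed. If one wanted $g_2g_1=0$ too, one would need commutativity of $\tilde{\mrm{CH}}_2$, which I would check from the relation $\sum_{k=0}^i\tilde h_{j-i+2k}$ being symmetric for $i,j\ge0$. That is not required here. The other point to verify carefully is the claimed description of $\mathrm{Ker}(L)$ as a linear span, which I would justify via the pairing argument above rather than relying solely on the Remark's phrase ``generated''.
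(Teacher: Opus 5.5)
Your proof is correct and follows the paper's argument: use bilinearity to reduce to a kernel generator ($\tilde h_{-1}$ or $\tilde h_{-i}+\tilde h_{i-2}$) on the left times a generator $\tilde h_j$, then apply the $i=-1$ and $i<-1$ cases of the multiplication rule. Your side remark about commutativity is wrong, though harmless because you never use it: $\tilde{\mrm{CH}}_2$ is not commutative (e.g.\ $\tilde h_0\tilde h_{-1}=\tilde h_{-1}$ but $\tilde h_{-1}\tilde h_0=0$), so $g_2g_1=0$ fails in general and having $g_1$ on the left is essential, not just convenient.
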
 
\begin{proof}
It is sufficient to check the lemma when $g_1$ is a generator of $\mathrm{Ker}(L)$ and $g_2$ is a generator of $\tilde{\mrm{CH}}_2$. Then it follows from straightforward application of Definition \ref{d M rings}. This completes the proof.  
\end{proof}

\begin{lemma} \label{l oe cont}
\begin{align}
\tilde{h}_{a}\tilde{h}_b - \tilde{h}_{a-1}\tilde{h}_{b-1} &= \tilde{h}_{a+b} \label{e cont}\\ 
\tilde{h}_{a}\tilde{h}_b - \tilde{h}_{a-2}\tilde{h}_{b} &= \tilde{h}_{b+a} + \tilde{h}_{b-a} \label{o cont}
\end{align}
\end{lemma}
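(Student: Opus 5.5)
The plan is to verify both identities directly from the multiplication rule defining $\tilde{\mrm{CH}}_2$, splitting on the sign of $a$, since only the first factor's index governs which rule applies. I will repeatedly use that for $i<-1$ we have $\tau(i) = -i-2$, so $\tilde h_i\tilde h_j = -\tilde h_{-i-2}\tilde h_j$. This reflection formula also holds formally at $i=-1$, because $\tau(-1)=-1$ and $\tilde h_{-1}\tilde h_j = 0$. So in all cases with $a<0$ I can rewrite the first factor with a nonnegative index and then apply the telescoping-sum rule for $i\ge 0$.

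For \eqref{e cont} I distinguish three cases.
\begin{itemize}
\item If $a\ge 1$, both products are given by the rule for nonnegative first index:
\[
\tilde h_a\tilde h_b=\sum_{k=0}^{a}\tilde h_{b-a+2k},\qquad \tilde h_{a-1}\tilde h_{b-1}=\sum_{k=0}^{a-1}\tilde h_{b-a+2k}.
\]
The two sums agree term by term except for the top term $k=a$ of the first, so the difference is $\tilde h_{a+b}$.
\item If $a=0$, then $\tilde h_0\tilde h_b=\tilde h_b$ and $\tilde h_{-1}\tilde h_{b-1}=0$, so the difference is $\tilde h_b=\tilde h_{a+b}$.
\item If $a\le -1$, put $c=-a-1\ge 0$ and reflect both first factors. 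The left side becomes $\tilde h_c\tilde h_{b-1}-\tilde h_{c-1}\tilde h_b$. Expanding,
\[
\tilde h_c\tilde h_{b-1}=\sum_{k=0}^{c}\tilde h_{b-1-c+2k},\qquad \tilde h_{c-1}\tilde h_b=\sum_{k=1}^{c}\tilde h_{b-1-c+2k}
\]
after a shift of index. The difference is therefore the single bottom term $\tilde h_{b-1-c}=\tilde h_{a+b}$. When $c=0$ the second product is $\tilde h_{-1}\tilde h_b=0$, so one checks this subcase directly, and it gives the same answer.
\end{itemize}

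For \eqref{o cont} the argument is the same.
\begin{itemize}
\item If $a\ge 2$, the sum for $\tilde h_{a-2}\tilde h_b$ is $\sum_{k=1}^{a-1}\tilde h_{b-a+2k}$ after reindexing. Removing it from $\sum_{k=0}^{a}\tilde h_{b-a+2k}$ leaves the two extreme terms $\tilde h_{b-a}+\tilde h_{b+a}$.
\item If $a=1$, then $\tilde h_{-1}\tilde h_b=0$ and $\tilde h_1\tilde h_b=\tilde h_{b-1}+\tilde h_{b+1}$.
\item If $a=0$, then $\tilde h_{-2}\tilde h_b=-\tilde h_0\tilde h_b=-\tilde h_b$, so the left side is $2\tilde h_b$, as required.
\item If $a\le -1$, reflection gives $\tilde h_a\tilde h_b=-\tilde h_{-a-2}\tilde h_b$ and $\tilde h_{a-2}\tilde h_b=-\tilde h_{-a}\tilde h_b$. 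The left side becomes $\tilde h_{-a}\tilde h_b-\tilde h_{-a-2}\tilde h_b$, which is the already-proved case with $a$ replaced by $-a\ge 1$. That case gives $\tilde h_{b+a}+\tilde h_{b-a}$, and this expression is symmetric under $a\mapsto -a$.
\end{itemize}

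The only delicate point is the boundary bookkeeping: index $-1$, where the product is zero rather than reflected, and the small cases $a=0,1$ and $c=0$. Each of these I check separately as above rather than folding them into the general sums.
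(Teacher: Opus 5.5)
Your proof is correct and takes the same approach as the paper. Both argue by a direct case split on $a$, applying the multiplication rule of $\tilde{\mrm{CH}}_2$ (the paper splits into $a\ge 0$, $a=-1$, $a<-1$). Your extra subcases $a=0,1$ and the reflection trick $\tilde h_i\tilde h_j=-\tilde h_{-i-2}\tilde h_j$ (valid also at $i=-1$) simply write out the boundary bookkeeping that the paper leaves as ``straightforward.''
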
 
\begin{proof}
Straightforward application of definitions depending on the cases $a\geq 0$, $a=-1$, or $a <-1$.  
\end{proof}

\begin{definition} 
Denote
\begin{align*}
U(a,b,c) &= \mathrm{sgn}(a)\sum_{j=0}^{|a|-1}\tilde{h}_{c-b-|a|+2j} \\ 
\end{align*}
with $U(0,b,c)=0$.
\end{definition} 

\begin{lemma} \label{l U}
\begin{align}
\tilde{h}_{a+b}\tilde{h}_{c}  - \tilde{h}_{b}\tilde{h}_{a+c}&= U(a,b,c) \label{e U} \\
(\tilde{h}_{b+a} + \tilde{h}_{b-a}) \tilde{h}_{c-1} - \tilde{h}_{b}\tilde{h}_{a+c-1} &=U(b+1,a,c) \label{o U}
\end{align}
\end{lemma}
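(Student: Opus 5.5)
Both identities in Lemma \ref{l U} will be proved by telescoping, using Lemma \ref{l oe cont}. The point is that \eqref{e cont} and \eqref{o cont} express a difference of two products whose indices differ by a fixed shift as a single generator (or a sum of two generators). For \eqref{e U}, fix $b,c$ and induct on $|a|$, treating $a>0$ and $a<0$ separately. The case $a=0$ is trivial, since both sides vanish and $U(0,b,c)=0$.

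For $a\ge 1$, write
\[
\tilde{h}_{a+b}\tilde{h}_{c}-\tilde{h}_{b}\tilde{h}_{a+c}
=\bigl(\tilde{h}_{a+b}\tilde{h}_{c}-\tilde{h}_{a+b-1}\tilde{h}_{c+1}\bigr)+\bigl(\tilde{h}_{a+b-1}\tilde{h}_{c+1}-\tilde{h}_{b}\tilde{h}_{a+c}\bigr).
\]
The second bracket is the $(a-1,b,c+1)$ instance of \eqref{e U}, so by induction it equals $U(a-1,b,c+1)$. For the first bracket I will use a variant of \eqref{e cont} that shifts one index up and the other down. This comes from \eqref{e cont} itself by comparing $\tilde{h}_{x}\tilde{h}_{y}$ and $\tilde{h}_{x-1}\tilde{h}_{y+1}$ through the common term $\tilde{h}_{x-1}\tilde{h}_{y-1}$:
\[
\tilde{h}_{x}\tilde{h}_{y}-\tilde{h}_{x-1}\tilde{h}_{y+1}=\bigl(\tilde{h}_{x}\tilde{h}_{y}-\tilde{h}_{x-1}\tilde{h}_{y-1}\bigr)-\bigl(\tilde{h}_{x-1}\tilde{h}_{y+1}-\tilde{h}_{x-1}\tilde{h}_{y-1}\bigr).
\]
The first difference is $\tilde{h}_{x+y}$ by \eqref{e cont}. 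The second is $\tilde{h}_{y+1+x-1}+\tilde{h}_{x-1-y-1}$ by \eqref{o cont} with $a=y+1$ and $b=x-1$, read with the commutativity of $\tilde{\mrm{CH}}_2$. Hence
\[
\tilde{h}_{x}\tilde{h}_{y}-\tilde{h}_{x-1}\tilde{h}_{y+1}=-\tilde{h}_{x-y-2}.
\]
Taking $x=a+b$ and $y=c$, the first bracket equals $-\tilde{h}_{a+b-c-2}$.

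This sign looks wrong at first, but the following inner-product reformulation suggests the bookkeeping is consistent. Under $\tilde{h}_k\tilde{h}_l=\tilde{h}_l\tilde{h}_k$, the summands of $U(a,b,c)$ have indices $c-b-a,\ c-b-a+2,\ \ldots,\ c-b+a-2$. Using the symmetry $\tilde{h}_{-i}\equiv-\tilde{h}_{i-2}$, valid after multiplication by Lemma \ref{l T g action} (equivalently modulo $\mathrm{Ker}(L)$), one has $-\tilde{h}_{a+b-c-2}\equiv\tilde{h}_{c-b-a}$. That is exactly the missing first summand, and $U(a-1,b,c+1)$ supplies the remaining indices $c+1-b-(a-1)+2j$.

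I therefore expect the real obstacle to be whether the identity is meant exactly in $\tilde{\mrm{CH}}_2$ or only modulo $\mathrm{Ker}(L)$. To settle this I will instead prove \eqref{e U} directly, by expanding both products with the defining multiplication rule in the case split $c\ge0$, $c=-1$, $c<-1$ (multiplication by $\tilde{h}_c$ on the left being the one with explicit rules). In the main case $c\ge 0$, both products are sums of $c+1$ consecutive-step generators, namely $\tilde{h}_{a+b-c+2k}$ and $\tilde{h}_{b-(a+c)+2k}$. After commuting the second to $\tilde{h}_{a+c}\tilde{h}_b$ if needed, their difference telescopes to a run of length $|a|$, which is $U(a,b,c)$. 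The cases $c=-1$ and $c<-1$ reduce to $c\ge0$ via $\tilde{h}_c\tilde{h}_j=-\tilde{h}_{\tau(c)}\tilde{h}_j$. Negative $a$ follows from the $a>0$ case by swapping roles, since $U(-a,b,c)$ matches $-U(a,\cdot,\cdot)$ after reindexing.

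For \eqref{o U}, I will first use \eqref{o cont} to rewrite
\[
\tilde{h}_{b+a}+\tilde{h}_{b-a}=\tilde{h}_a\tilde{h}_b-\tilde{h}_{a-2}\tilde{h}_b,
\]
so that the left side becomes a combination of triple products. Rather than manipulate these, I will apply \eqref{e U} twice: once with the pair $(b+a,\,c-1)$ against $(b,\,a+c-1)$, giving $U(a,b,c-1)$, and once with the term $\tilde{h}_{b-a}\tilde{h}_{c-1}$ treated as $U(-a,b,c-1)$ plus $\tilde{h}_b\tilde{h}_{c-a-1}$. After that, the claim reduces to summing two explicit runs $U$ and comparing the result with $U(b+1,a,c)$, which is again a reindexing check. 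The main risk throughout is the boundary and sign behaviour when indices become negative. The uniform way I will handle this is to verify everything after applying $L$, where $\tilde{h}_i\mapsto\mathrm{sgn}(i+1)h_{\tau(i)}$, and then lift the result using Lemma \ref{l T g action}.
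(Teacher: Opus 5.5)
\paragraph{Gap: $\tilde{\mrm{CH}}_2$ is not commutative.} Every argument you give for \eqref{e U} uses commutativity of $\tilde{\mrm{CH}}_2$, and that ring is not commutative. A product $\tilde{h}_i\tilde{h}_j$ is evaluated by the rule attached to the \emph{left} index $i$. For example, $\tilde{h}_0\tilde{h}_1=\tilde{h}_1$ but $\tilde{h}_1\tilde{h}_0=\tilde{h}_{-1}+\tilde{h}_1$. In fact, commutativity would force $\tilde{h}_{-1}=\tilde{h}_0\tilde{h}_{-1}=\tilde{h}_{-1}\tilde{h}_0=0$ and likewise $\tilde{h}_{-i}=\tilde{h}_{-i}\tilde{h}_0=-\tilde{h}_{i-2}$. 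That means $\mathrm{Ker}(L)=0$, which would make Lemma \ref{l T g action} and Theorem \ref{t E RV} vacuous.

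The concrete failing step is your evaluation $\tilde{h}_{x}\tilde{h}_{y}-\tilde{h}_{x-1}\tilde{h}_{y+1}=-\tilde{h}_{x-y-2}$. You applied \eqref{o cont} to $\tilde{h}_{x-1}\tilde{h}_{y+1}-\tilde{h}_{x-1}\tilde{h}_{y-1}$, where the \emph{right} factor moves, and \eqref{o cont} does not cover that. Expanding with the left-factor rule gives $\tilde{h}_{x-1}\tilde{h}_{y+1}-\tilde{h}_{x-1}\tilde{h}_{y-1}=\tilde{h}_{x+y}-\tilde{h}_{y-x}$, hence $\tilde{h}_{x}\tilde{h}_{y}-\tilde{h}_{x-1}\tilde{h}_{y+1}=\tilde{h}_{y-x}$ for all $x,y$. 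Your value differs from this by the nonzero kernel element $\tilde{h}_{y-x}+\tilde{h}_{x-y-2}$, which is exactly the discrepancy you noticed.

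Your other two plans have the same defect:
\begin{itemize}
\item In the direct expansion, $\tilde{h}_{a+b}\tilde{h}_c$ and $\tilde{h}_b\tilde{h}_{a+c}$ are governed by the signs of $a+b$ and $b$, not of $c$; this is why the paper splits into cases 1.1--1.6. Moving $\tilde{h}_c$ to the left changes the element.
\item Verifying after $L$ and ``lifting'' via Lemma \ref{l T g action} is not valid. That lemma says kernel elements annihilate from the left; it does not make $L$ injective. So this route only gives \eqref{e U} modulo $\mathrm{Ker}(L)$, whereas the lemma asserts an exact identity between specific elements.
\end{itemize}

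\paragraph{How to repair it.} The telescoping itself is sound once that bracket is computed correctly. With $x=a+b$ and $y=c$, the bracket is $\tilde{h}_{c-b-a}$, the $j=0$ term of $U(a,b,c)$. The term $U(a-1,b,c+1)$ supplies $j=1,\dots,a-1$, so induction on $a\geq 1$ proves \eqref{e U} exactly. Negative $a$ then follows from the substitution $(a,b,c)\mapsto(-a,a+b,a+c)$, since $-U(-a,a+b,a+c)=U(a,b,c)$.

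Your reduction of \eqref{o U} to two instances of \eqref{e U} needs no commutativity and is correct, but note what it leaves. Since $U(a,b,c-1)+U(-a,b,c-1)=0$ identically, the left side of \eqref{o U} equals $\tilde{h}_b\tilde{h}_{c-a-1}$. It is this product, not a sum of two runs, that must be matched with $U(b+1,a,c)$, using the cases $b\geq 0$, $b=-1$, $b<-1$.

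Repaired this way, your route would replace the paper's case-by-case verification (cases 1.1--1.6 and 2.1--2.8) with a single two-term identity plus a three-case check.
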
 
\begin{proof}
We first prove \eqref{e U} in the case $b=-1$. Then if $a > 0$
\[
\tilde{h}_{a-1}\tilde{h}_{c}  = \sum_{j=0}^{a-1}\tilde{h}_{c-a+1+2j}.
\]
If $a=0$, then 
\[
\tilde{h}_{-1}\tilde{h}_{c} =0.
\]
If $a <0$, then 
\begin{align*}
\tilde{h}_{a-1}\tilde{h}_{c} &= -\tilde{h}_{-a-1}\tilde{h}_{c} \\
&= -\sum_{j=0}^{-a-1}\tilde{h}_{c+a+1+2j}.
\end{align*}
This proves the case $b=-1$ for \eqref{e U}. For \eqref{o U} with $b=-1$, the left side is 
\[
(\tilde{h}_{-1+a} + \tilde{h}_{-1-a}) \tilde{h}_{c-1}
\]
which is equal to 0 by Lemma \ref{l T g action} because 
\[
\tilde{h}_{-1+a} + \tilde{h}_{-1-a} \in \mathrm{Ker}(L).
\]

Now assume $b \neq -1$. To prove \eqref{e U} we consider the six cases
\begin{align*} 
1.1)& a+b \geq 0, b \geq 0, a\geq 0\\
1.2)& a+b \geq 0, b \geq 0, a< 0\\
1.3)& a+b \geq 0, b < -1, a\geq 0\\
1.4)& a+b < 0, b \geq 0, a< 0\\
1.5)& a+b < 0, b <-1, a\geq  0\\
1.6)& a+b \geq 0, b \geq 0, a< 0.
\end{align*}
To prove \eqref{o U} we consider the eight cases
\begin{align*} 
2.1)& b+a \geq 0, b-a \geq 0, a\geq 0, b\geq 0\\
2.2)& b+a \geq 0, b-a \geq 0, a<0 , b\geq 0\\
2.3)& b+a \geq 0, b-a <0, a\geq 0, b\geq 0\\
2.4)& b+a \geq 0, b-a <0 , a\geq 0, b<-1 \\
2.5)& b+a <0, b-a \geq 0, a< 0, b\geq 0\\
2.6)& b+a <0 , b-a \geq 0, a< 0, b< -1\\ 
2.7)& b+a <0 , b-a <0 , a\geq 0, b< -1\\
 2.8)&  b+a <0 , b-a < 0, a< 0, b< -1.
\end{align*}
As an example we prove the lemma for case 2.3. Then left side of \eqref{o U} becomes 
\[
(\tilde{h}_{b+a} - \tilde{h}_{a-b-2}) \tilde{h}_{c-1} - \tilde{h}_{b}\tilde{h}_{a+c-1}
\]
which is 
\[
\sum_{j=0}^{a+b} \tilde{h}_{c-1-a-b+2j}-\sum_{j=0}^{a-b-2} \tilde{h}_{c-1-a+b+2+2j} - \sum_{j=0}^{b} \tilde{h}_{c-1+a-b+2j}.
\]
Combine the last two sums to yield 
\[
\sum_{j=0}^{a+b} \tilde{h}_{c-1-a-b+2j}-\sum_{j=0}^{a-1} \tilde{h}_{c-1-a+b+2+2j}.
\]
Now using $a\geq 0$, we combine the above two sums to yield 
\[
\sum_{j=0}^{b} \tilde{h}_{c-1-a-b+2j}.
\]
This proves the lemma for case 2.3. The other cases are similar. This completes the proof. 
\end{proof}

\begin{lemma} \label{l U ker}
For all integers $a,b,c_1,c_2$,
\begin{equation}\label{U ker}
U(a,b,c_1) + U(a,c_1+c_2-b, c_2) \in \mathrm{Ker}(L).
\end{equation}
and 
\begin{equation} \label{U ker s}
U(a,b,b) \in \mathrm{Ker}(L)
\end{equation}
\end{lemma}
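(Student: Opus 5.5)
Both claims reduce to a symmetry of the index set in the definition of $U$. The key observation is that $U(a,b,c)$ depends on $b$ and $c$ only through $c-b$. The plan is to show that the two sums in \eqref{U ker} are exchanged, term by term, by the involution $m\mapsto -m-2$ on indices. Each pair $\tilde h_m+\tilde h_{-m-2}$ lies in $\mathrm{Ker}(L)$, so the result follows.

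\textbf{Step 1: the involution.} For every integer $m$ we have $\tau(-m-2)=|-m-1|-1=\tau(m)$ and $\mathrm{sgn}(-m-1)=-\mathrm{sgn}(m+1)$. Hence
\[
L(\tilde h_{-m-2})=-L(\tilde h_m),
\]
that is, $\tilde h_m+\tilde h_{-m-2}\in\mathrm{Ker}(L)$. This also agrees with the Remark describing the generators of $\mathrm{Ker}(L)$. In particular, when $m=-1$ we get $L(\tilde h_{-1})=\mathrm{sgn}(0)h_0=0$, so $\tilde h_{-1}\in\mathrm{Ker}(L)$.

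\textbf{Step 2: proof of \eqref{U ker}.} If $a=0$ both terms vanish, so assume $a\neq 0$ and put $d=c_1-b$. The first term is
\[
U(a,b,c_1)=\mathrm{sgn}(a)\sum_{j=0}^{|a|-1}\tilde h_{d-|a|+2j}.
\]
In the second term the offset is $c_2-(c_1+c_2-b)=-d$, so
\[
U(a,c_1+c_2-b,c_2)=\mathrm{sgn}(a)\sum_{j=0}^{|a|-1}\tilde h_{-d-|a|+2j}.
\]
Apply $m\mapsto -m-2$ to the index $m_j=d-|a|+2j$ of the first sum. This gives $-d+|a|-2-2j$, which is the index of the second sum at $j'=|a|-1-j$. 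As $j$ runs over $0,\dots,|a|-1$, so does $j'$. Therefore the total is
\[
\mathrm{sgn}(a)\sum_{j=0}^{|a|-1}\bigl(\tilde h_{m_j}+\tilde h_{-m_j-2}\bigr),
\]
and each summand lies in $\mathrm{Ker}(L)$ by Step 1.

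\textbf{Step 3: proof of \eqref{U ker s}.} Here $d=0$, and the index set $\{-|a|+2j: 0\le j\le |a|-1\}$ is itself invariant under $m\mapsto -m-2$, via $j\leftrightarrow |a|-1-j$. Because the index set is mapped to itself, we cannot simply double the sum. Instead, pair $j$ with $|a|-1-j$ for $j<(|a|-1)/2$; each such pair is a kernel element of the form in Step 1. If $|a|$ is odd, there is one unpaired fixed point, namely the middle term $j=(|a|-1)/2$, whose index is $-1$. That term is $\tilde h_{-1}\in\mathrm{Ker}(L)$.

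\textbf{Main obstacle.} The only delicate point is this bookkeeping in Step 3, namely checking that the single fixed point of the involution is exactly the index $-1$. Everything else is a direct reindexing.
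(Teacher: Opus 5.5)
Your proposal is correct and is essentially the paper's proof. For the first claim you reindex the second sum by $j\mapsto |a|-1-j$, so the terms combine into pairs $\tilde{h}_m+\tilde{h}_{-m-2}\in\mathrm{Ker}(L)$; for $U(a,b,b)$ you pair $j$ with $|a|-1-j$ within the single symmetric sum, leaving only the fixed point $\tilde{h}_{-1}$ when $|a|$ is odd. Your Step 3 bookkeeping (pairing only $j<(|a|-1)/2$) is actually more accurate than the paper's displayed range $0\le j\le\lfloor(|a|-1)/2\rfloor$, which overcounts the middle term $\tilde{h}_{-1}$ for odd $|a|$; this slip does not affect membership in $\mathrm{Ker}(L)$.
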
 
\begin{proof}
The left side of \eqref{U ker} is 
\[
\mathrm{sgn}(a)\left(\sum_{j=0}^{|a|-1} \tilde{h}_{c_1-b-|a|+2j}+ \sum_{j=0}^{|a|-1} \tilde{h}_{-c_1+b-|a|+2j}\right)
\]
We re-index the second sum by $j \mapsto |a|-1-j$ and combine the two sums to yield 
\[
\mathrm{sgn}(a)\sum_{j=0}^{|a|-1} \tilde{h}_{c_1-b-|a|+2j}+  \tilde{h}_{-c_1+b+|a|-2j-2}.
\]
Each summand is a generator  (or possibly 2 times the generator $\tilde{h}_{-1}$) of $\mathrm{Ker}(L)$. 

The left side of equation \eqref{U ker s} is 
\[
\mathrm{sgn}(a)\sum_{j=0}^{|a|-1}\tilde{h}_{-|a|+2j}
\]
which is 
\[
\mathrm{sgn}(a) (\tilde{h}_{-1}\mathbf{1}(a \equiv 1 \mod 2)+\sum_{j=0}^{\lfloor (|a|-1)/2 \rfloor}\tilde{h}_{-|a|+2j}+\tilde{h}_{|a|-2j-2})
\]
Again, each summand is a generator of $\mathrm{Ker}(L)$. This completes the proof. 
\end{proof} 

\section{Radius-value trees} \label{rv trees}

\begin{definition} 
Define a radius-value tree of height $n$ to be a rooted plane ternary tree whose vertices are labeled with even integers such that: 

i) the label of the root of a tree $T$ is called the value of the tree, denoted by $\val(T)$. 

ii) a height 0 tree $T$ consists of of one labeled vertex 

iii) a height $(n+1)$ tree $T$ consists of a 4-tuple $(l, T_1, T_2,T_3)$ where $\val(T) =l$ and where the $T_i$ are three radius-value trees of height $n$ such that 
\[
|l - \val(T_1) - \val(T_3)| \leq \tau(\val(T_2)). 
\]
We call $T_2$ the range-determiner subtree of $T$. Define the sign of $T$ $\mathrm{sgn}(T)$ to be 1 if $T$ is height 0, and otherwise 
\[
\mathrm{sgn}(T) = \mathrm{sgn}(\val(T_2)+1)\mathrm{sgn}(T_1)\mathrm{sgn}(T_2)\mathrm{sgn}(T_3).
\]
Let $\mathrm{RV}_n$ denote the set of all radius-value trees of height $n$. Let $\mathrm{RV}_{n; 2} \subset \mathrm{RV}_n$ consist of trees $T$ such that each leaf vertex of $T$ has label 2. Let $\mathrm{RV}_{n; 2}(k) \subset \mathrm{RV}_{n; 2}$ consist of trees $T$ such that $\val(T) = k$. 

Define an involution $\mathrm{Inv}$ on $\mathrm{RV}_n$ by 
\[
\mathrm{Inv}(T) = T
\]
if $T$ is height 0, and for $T = (\val(T_1)+\val(T_3)+ j, T_1,T_2,T_3)$, define  
\[
\mathrm{Inv}(T) =  (\val(\mathrm{Inv}(T_1))+\val(\mathrm{Inv}(T_3))- j, \mathrm{Inv}(T_1),T_2,\mathrm{Inv}(T_3)). 
\]
\end{definition} 

\begin{definition} 

For a radius-value tree $T \in \mrm{RV}_{n;2}$, define the index of $T$
\[
\ind(T) = \val(T) -2^{n+1}.
\]
 If $T$ is height 0, define $\mathrm{rad}(T)$ to be 0. For $T=(j, T_1,T_2,T_3)$ define 
\[
\mathrm{rad}(T) = \rho(\mathrm{rad}(T_1, T_3),\tau(\val(T_2)), \ind(T_1)+\ind(T_3), j)
\]
where for any trees $T_1, T_3$ we define
\[
\mathrm{rad}(T_1, T_3)=\rho(\mathrm{rad}(T_1),\mathrm{rad}(T_3), \ind(T_1), \ind(T_3)).
\]

Define the subset $\mathrm{RV}_{n; 2}^+ \subset \mathrm{RV}_{n; 2}$ by 
\[
\mathrm{RV}_{0; 2}^+  = \mathrm{RV}_{0; 2}
\]
and for $n \geq1$, to consist of trees $T = (j, T_1,T_2,T_3)$ such that 
\[
T_i \in \mathrm{RV}_{n-1; 2}^+, \,\,\,\,\, i =1,2,3
\]
and 
\[
\val(T_2) \geq \max(\mathrm{rad}(T_2) - 2^n,0). 
\]
\end{definition}

\begin{remark}
The involution $\mathrm{Inv}$ is well-defined because the values of the respective range-determiner subtrees of $T$ and $\mathrm{Inv}(T)$ are equal by construction, and because $\mathrm{Inv}$ preserves the height of a tree, which can be proved by induction. 
\end{remark}

\begin{lemma} \label{l inv on 2}
Suppose $T \in \mathrm{RV}_{n;2}$ and 
\[
\val(T) = 2^{n+1}+j.
\]
Then 
\[
\val(\mathrm{Inv}(T)) = 2^{n+1}-j.
\]
\end{lemma}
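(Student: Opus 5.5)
Induction on $n$ proves the statement, and we will show it in the slightly stronger form that $\mathrm{Inv}$ maps $\mathrm{RV}_{n;2}$ to itself. The point is that $\mathrm{Inv}$ reflects the root value about the sum of the reflected values of the outer subtrees. For $n=0$ the tree $T$ is a single vertex labelled $2$, so $\val(T)=2=2^{1}+0$ and $j=0$. Since $\mathrm{Inv}(T)=T$, we get $\val(\mathrm{Inv}(T))=2=2^1-0$. The leaves are unchanged, so $\mathrm{Inv}(T)\in\mathrm{RV}_{0;2}$.

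For the inductive step, let $T\in\mathrm{RV}_{n+1;2}$ with $T=(l,T_1,T_2,T_3)$, where each $T_i\in\mathrm{RV}_{n;2}$. Write $\val(T_i)=2^{n+1}+j_i$ for $i=1,3$. By the definition of $\mathrm{Inv}$ we have $l=\val(T_1)+\val(T_3)+j'$ with $j'=l-\val(T_1)-\val(T_3)$. Hence
\[
l = 2^{n+2}+j_1+j_3+j',
\]
so the $j$ of the statement is $j=j_1+j_3+j'$. The induction hypothesis gives $\val(\mathrm{Inv}(T_i))=2^{n+1}-j_i$ for $i=1,3$. Therefore
\[
\val(\mathrm{Inv}(T)) = (2^{n+1}-j_1)+(2^{n+1}-j_3)-j' = 2^{n+2}-j,
\]
which is the claim at height $n+1$.

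It remains to check that $\mathrm{Inv}(T)$ is again a legitimate tree in $\mathrm{RV}_{n+1;2}$, so that the induction continues. The defining inequality requires
\[
|\val(\mathrm{Inv}(T))-\val(\mathrm{Inv}(T_1))-\val(\mathrm{Inv}(T_3))| \le \tau(\val(T_2)).
\]
The left side equals $|-j'|=|j'|$, and $|j'|\le\tau(\val(T_2))$ because $T$ itself is a valid tree. The range-determiner subtree $T_2$ is carried over unchanged. The leaves of $\mathrm{Inv}(T_1)$ and $\mathrm{Inv}(T_3)$ are all labelled $2$ by the induction hypothesis, and the leaves of $T_2$ are labelled $2$ because $T\in\mathrm{RV}_{n+1;2}$. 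Finally, all labels stay even: $j'$ is even since $l$ and the $\val(T_i)$ are even.

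The only delicate point is organising the induction so that membership in $\mathrm{RV}_{n;2}$ and the value formula are proved together. The value formula at $T_1$ and $T_3$ is exactly what makes the final display telescope, and membership is needed so that the hypothesis applies to the subtrees at the next level.
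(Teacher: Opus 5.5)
Your proof is correct and follows the paper's argument: induction on $n$, writing $\val(T_i)=2^{n+1}+j_i$ for the outer subtrees, then substituting the inductive values $\val(\mathrm{Inv}(T_i))=2^{n+1}-j_i$ into the defining formula for $\mathrm{Inv}$. Your extra check that $\mathrm{Inv}(T)\in\mathrm{RV}_{n+1;2}$ matches the paper's remark on well-definedness of $\mathrm{Inv}$ and does no harm. It is not actually needed for the value computation, though, since the hypothesis is only applied to $T_1$ and $T_3$, and these lie in $\mathrm{RV}_{n;2}$ simply because $T$ does.
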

\begin{proof}
We use induction on $n$. The statement is true for $n=0$ because the only possible $T$ is the height 0 tree labeled $2$. Assume it is true for some $n=N\geq 0$. Then for a  $T \in \mathrm{RV}_{N+1;2}$
with 
\begin{align*}
T &= (2^{N+1}+ j_1 +2^{N+1}+ j_3+j, T_1,T_2, T_3) \\ 
&= (2^{N+2}+ j_1 + j_3+j, T_1,T_2, T_3) 
\end{align*}
where
\[
\val(T_i) = 2^{N+1}+j_i
\]
and by the induction hypothesis
\[
\val(\mathrm{Inv}(T_i)) = 2^{N+1}-j_i.
\]
 Thus
\begin{align*}
\val(\mathrm{Inv}(T)) &= 2^{N+1}- j_1 +2^{N+1}- j_3-j \\ 
&=2^{N+2}- j_1- j_3-j.
\end{align*}
This completes the proof. 
\end{proof}

\section{Combinatorial expressions and positivity for recurrence relation} \label{pos}

We first consider the recurrence relations from Lemma \ref{l mlcre} in the ring $\tilde{\mrm{CH}}_2$ and prove positivity properties for them. Then we apply the homomorphism $L$ and prove that these properties also extend to the images in $\mrm{CH}_2$. 

\begin{definition} 
For integer $n \geq 0$, we recursively define elements $\tilde{\mathrm{err}}(n,0)$ and $\tilde{\mathrm{err}}(n,1) \in \tilde{\mrm{CH}}_2$ by 
\begin{align*}
\tilde{\mathrm{err}}(0,0) &= \tilde{h}_2\\ 
\tilde{\mathrm{err}}(0,1) &= \tilde{h}_1,
\end{align*}
and
\begin{align}
\tilde{\mathrm{err}}(n+1,0) &= (\tilde{\mathrm{err}}(n+1,0)^2 - \tilde{\mathrm{err}}(n+1,1)^2)  \tilde{\mathrm{err}}(n+1,0)  \label{0 rec}\\ 
\tilde{\mathrm{err}}(n+1,1) &= (2\tilde{\mathrm{err}}(n+1,0)^2   -\tilde{h}_1\tilde{\mathrm{err}}(n+1,1)\tilde{\mathrm{err}}(n+1,0))\tilde{\mathrm{err}}(n+1,1) \label{1 rec}
\end{align}
\end{definition} 

\begin{theorem} \label{t E RV}
For each integer $n \geq 0$, define $E(n) \in \tilde{\mrm{CH}}_2$ by  
\begin{align*}
E(n) &= \sum_{T \in \mathrm{RV}_{n;2}} \mathrm{sgn }(T)\tilde{h}_{\val(T)}\\ 
\end{align*}
Then
\begin{align}
\tilde{\mathrm{err}}(n,0) &\equiv E(n) \mod \mathrm{Ker}(L) \label{E 0 equiv}\\ 
\tilde{\mathrm{err}}(n,1) &\equiv S_{-1}(E(n)) \mod \mathrm{Ker}(L). \label{E 1 equiv}
\end{align}
\end{theorem}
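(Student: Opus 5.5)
We argue by induction on $n$, working throughout modulo $\mathrm{Ker}(L)$. By Lemma \ref{l T g action}, $\mathrm{Ker}(L)$ is an ideal that annihilates everything, so congruences modulo it may be multiplied freely. (The recursion \eqref{0 rec}--\eqref{1 rec} is read with index $n$ on the right-hand side, as in Lemma \ref{l mlcre}.) For $n=0$, $\mathrm{RV}_{0;2}$ consists of the single leaf labeled $2$ with sign $1$. Hence $E(0)=\tilde h_2=\tilde{\err}(0,0)$ and $S_{-1}E(0)=\tilde h_1=\tilde{\err}(0,1)$.

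For the inductive step write $E=E(n)=\sum_T \mathrm{sgn}(T)\tilde h_{\val(T)}$, so that $S_{-1}E=\sum_T\mathrm{sgn}(T)\tilde h_{\val(T)-1}$.

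\emph{Expanding $E(n+1)$.} First expand the target. A tree of height $n+1$ is a triple $(T_1,T_2,T_3)$ together with a root label $l=\val(T_1)+\val(T_3)+\val(T_2)'$, where $|l-\val(T_1)-\val(T_3)|\le\tau(\val(T_2))$ and the shift runs over integers of the right parity. Hence $E(n+1)$ is the sum over triples of
\[
\mathrm{sgn}(\val(T_2)+1)\,\mathrm{sgn}(T_1)\mathrm{sgn}(T_2)\mathrm{sgn}(T_3)\sum_{k=0}^{\tau(\val(T_2))}\tilde h_{\val(T_1)+\val(T_3)-\tau(\val(T_2))+2k}.
\]
By the multiplication rule in $\tilde{\mrm{CH}}_2$ (the cases $i\ge0$, $i=-1$, $i<-1$ with $\tau$), the inner signed sum is exactly $\tilde h_{\val(T_2)}\tilde h_{\val(T_1)+\val(T_3)}$. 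The factor $\mathrm{sgn}(\val(T_2)+1)$ absorbs the minus sign from $\tilde h_i=-\tilde h_{\tau(i)}$ for $i<-1$, and the value $0$ at $i=-1$ is consistent. Therefore
\[
E(n+1)=\sum_{T_1,T_2,T_3}\pm\,\tilde h_{\val(T_2)}\tilde h_{\val(T_1)+\val(T_3)},
\]
a "trilinear" expression $\Phi(E,E,E)$, where $\Phi(\tilde h_a,\tilde h_b,\tilde h_c)=\tilde h_b\tilde h_{a+c}$.

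\emph{Rewriting the recursion.} Next rewrite the right side of the recursion in the same trilinear form. With $\tilde{\err}(n,0)\equiv E$ and $\tilde{\err}(n,1)\equiv S_{-1}E$, we must show
\[
(E^2-(S_{-1}E)^2)E\equiv \Phi(E,E,E)
\]
and
\[
(2E^2-\tilde h_1 (S_{-1}E) E)S_{-1}E\equiv S_{-1}\Phi(E,E,E).
\]
Expand everything over triples $(a,b,c)$ of values. The first identity reduces, term by term, to
\[
\tilde h_a\tilde h_c\tilde h_b-\tilde h_{a-1}\tilde h_{c-1}\tilde h_b\equiv\tilde h_b\tilde h_{a+c},
\]
which is exactly \eqref{e cont} of Lemma \ref{l oe cont} multiplied by $\tilde h_b$. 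This holds exactly, not merely modulo the kernel. For the second identity, write $\tilde h_1\tilde h_{a-1}=\tilde h_a+\tilde h_{a-2}$. The term becomes
\[
2\tilde h_a\tilde h_c\tilde h_{b-1}-(\tilde h_a+\tilde h_{a-2})\tilde h_{c-1}\tilde h_{b-1}.
\]
Using \eqref{e cont} and \eqref{o cont}, we reorganize this as $\tilde h_b\tilde h_{a+c-1}$ plus error terms of the form $U(\cdot,\cdot,\cdot)$ from Lemma \ref{l U}, specifically differences $\tilde h_{x+y}\tilde h_z-\tilde h_y\tilde h_{x+z}$ and their \eqref{o U} analogues.

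\emph{Main obstacle.} The crux is showing that the $U$-error terms cancel modulo $\mathrm{Ker}(L)$ after summing over all triples. Here we use the symmetry $E$-summation $T_1\leftrightarrow T_3$, and more importantly the involution $\mathrm{Inv}$ together with Lemma \ref{l inv on 2}. Values pair up as $2^{n+1}\pm j$, so $S_{-2^{n+1}}E$ is "symmetric" up to the kernel, i.e. $\tilde h_{2^{n+1}+j}$ pairs with $\tilde h_{2^{n+1}-j}$ with matching sign. Under this pairing, an error term $U(a,b,c_1)$ is matched with $U(a,c_1+c_2-b,c_2)$, and Lemma \ref{l U ker} \eqref{U ker} puts each pair in $\mathrm{Ker}(L)$. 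The self-paired terms are handled by \eqref{U ker s}. I expect the delicate bookkeeping to lie in verifying that the index shifts produced by \eqref{o U} (the $b+1$ and $c-1$) match exactly under $\mathrm{Inv}$, and that the signs $\mathrm{sgn}(T)$ are preserved by $\mathrm{Inv}$. The latter follows because $\mathrm{Inv}$ fixes range-determiner subtrees and, inductively, signs. Once the cancellation is established, applying $S_{-1}$, which commutes with multiplication by a fixed generator in the relevant sense, yields \eqref{E 1 equiv}, completing the induction.
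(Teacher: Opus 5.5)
Your skeleton is the paper's: induction, $E(n+1)=\sum\pm\tilde h_{\val(T_2)}\tilde h_{\val(T_1)+\val(T_3)}$, Lemma \ref{l oe cont}, and $U$-terms cancelled in pairs via $\mathrm{Inv}$, Lemma \ref{l inv on 2} and Lemma \ref{l U ker}. However, the first congruence is \emph{not} exact, contrary to your claim.

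\emph{The first congruence.} Multiplying \eqref{e cont} on the right by $\tilde h_b$ gives $\tilde h_{a+c}\tilde h_b$, not $\tilde h_b\tilde h_{a+c}$, and $\tilde{\mrm{CH}}_2$ is not commutative. Already at $n=0$ one has $(\tilde h_2^2-\tilde h_1^2)\tilde h_2=\tilde h_4\tilde h_2=\tilde h_{-2}+\tilde h_0+\tilde h_2+\tilde h_4+\tilde h_6$, whereas $E(1)=\tilde h_2\tilde h_4=\tilde h_2+\tilde h_4+\tilde h_6$. Likewise, Lemma \ref{l T g action} only gives annihilation from the left, e.g.\ $\tilde h_0\tilde h_{-1}=\tilde h_{-1}\neq 0$. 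That misreading is harmless, because $\mathrm{Ker}(L)$ is a two-sided ideal anyway. Reconciling the order of factors in the recursion with the order $\tilde h_{\val(T_2)}\tilde h_{\val(T_1)+\val(T_3)}$ in $E(n+1)$ is exactly what the paper handles with \eqref{e U} and the pairing $(T_1,T_2,T_3)\mapsto(T_1,\mathrm{Inv}(T_2),\mathrm{Inv}(T_3))$. You can close this in one line, but you must state it: $L$ is a ring homomorphism into the commutative ring $\mrm{CH}_2$, so every commutator lies in $\mathrm{Ker}(L)$.

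\emph{The second recursion.} It is mis-expanded, and its decisive step is only announced. The term $\tilde h_1(S_{-1}E)E(S_{-1}E)$ contributes $(\tilde h_a+\tilde h_{a-2})\tilde h_c\tilde h_{b-1}$, not $(\tilde h_a+\tilde h_{a-2})\tilde h_{c-1}\tilde h_{b-1}$. Your expression sums to $2E^2S_{-1}E-(E+S_{-2}E)(S_{-1}E)^2$, which is not congruent to the target. At $n=0$, apply $L$ and set $u_1=u_2=1$, so that $\tilde h_m\mapsto m+1$: your expression gives $20$, while $S_{-1}E(1)=\tilde h_1+\tilde h_3+\tilde h_5$ gives $12$.

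With the correct expansion, \eqref{o cont} and \eqref{o U} give $(\tilde h_a-\tilde h_{a-2})\tilde h_c\tilde h_{b-1}=(\tilde h_{c+a}+\tilde h_{c-a})\tilde h_{b-1}=\tilde h_c\tilde h_{a+b-1}+U(c+1,a,b)$. Calling the middle tree $T_2$, every error term is therefore $U(\val(T_2)+1,\val(T_1),\val(T_3))$. These cancel under $(T_1,T_2,T_3)\mapsto(\mathrm{Inv}(T_1),T_2,\mathrm{Inv}(T_3))$ by \eqref{U ker} with $c_2=2^{n+2}-c_1$. The fixed points, where $\val(T_1)=\val(T_3)=2^{n+1}$, fall under \eqref{U ker s}. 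That derivation is the content of the paper's inductive step, and it is missing from your proposal.

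\emph{A simpler route.} Once you use commutativity modulo $\mathrm{Ker}(L)$, no involution is needed at all. By \eqref{e cont} and $\tilde h_1S_{-1}E=E+S_{-2}E$, one has exactly $E(n+1)=E\,(E^2-(S_{-1}E)^2)$ and $S_{-1}E(n+1)=E\,(E\,S_{-1}E-S_{-1}E\,(\tilde h_1S_{-1}E-E))$. Reordering factors modulo the kernel then yields both recursions, which is a genuinely simpler argument than the paper's. In either version, remember that $S_{-1}$ does not preserve $\mathrm{Ker}(L)$ (for instance $S_{-1}\tilde h_{-1}=\tilde h_{-2}$), so it may only be applied to exact identities, not to congruences.
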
 
\begin{proof} 
We use induction on $n$. The theorem is true for $n=0$ because $\mathrm{RV}_{n;2}$ consists of the tree with a single vertex labeled $2$. Thus 
\begin{align*}
E(0) = \tilde{h}_2 = \tilde{\mathrm{err}}(0,0) \\ 
S_{-1}(E(0)) = \tilde{h}_1 = \tilde{\mathrm{err}}(0,1).
\end{align*}
Assume the theorem is true for some $n=N \geq 0$. From equation \eqref{0 rec}, we must prove 
\[
E(N+1) \equiv E(N)^3 - S_{-1}(E(N))^2E(N) \mod \mathrm{Ker}(L).
\]
We express the right side as a sum over triples 
\begin{equation} \label{init e  sum}
\sum_{(T_1, T_2, T_3) \in \mathrm{RV}_{N;2}^3}\mathrm{sgn}(T_1, T_2, 3)( \tilde{h}_{\val(T_1)} \tilde{h}_{\val(T_2)}  - \tilde{h}_{\val(T_1)-1}\tilde{h}_{\val(T_2)-1} ) \tilde{h}_{\val(T_3)}
\end{equation}
where we denote
\[
\mathrm{sgn}(T_1, T_2, 3) = \prod_{i=1}^3 \mathrm{sgn}(T_i)
\]
From Lemma \ref{l oe cont}, equation \eqref{e cont}, the sum \eqref{init e sum} is equal to 
\[
\sum_{(T_1, T_2, T_3) \in \mathrm{RV}_{N;2}^3} \mathrm{sgn}(T_1, T_2, 3) \tilde{h}_{\val(T_1)+\val(T_2)}   \tilde{h}_{\val(T_3)},
\]
which by Lemma \ref{l U}, equation \eqref{e U},  is equal to 
\begin{equation} \label{U triple e}
\sum_{(T_1, T_2, T_3) \in \mathrm{RV}_{N;2}^3} \mathrm{sgn}(T_1, T_2, 3) \left(\tilde{h}_{\val(T_2)}   \tilde{h}_{\val(T_1)+\val(T_3)}+ U(\val(T_1),\val(T_2), \val(T_3))\right). 
\end{equation}
Now we define an involution $\mathrm{Inv}_{2,3}$ on $\mathrm{RV}_{N;2}^3$ by 
\[
\mathrm{Inv}_{2,3}((T_1, T_2, T_3) ) = (T_1, \mathrm{Inv}(T_2), \mathrm{Inv}(T_3)). 
\]
Let 
\[
\mathrm{RV}_{N;2}^{(i)}/\mathrm{Inv}_{2,3}
\]
denote the set of equivalence class representatives from those equivalence classes of size $i$ for $\mathrm{RV}_{N;2}$ under the action of $\mathrm{Inv}_{2,3}$, where $i=1$ or 2. 
We express the sum \eqref{U triple e} as 
\begin{align}
&\sum_{(T_1, T_2, T_3) \in \mathrm{RV}_{N;2}} \mathrm{sgn}(T_1, T_2, 3) (\tilde{h}_{\val(T_2)}   \tilde{h}_{\val(T_1)+\val(T_3)} ) \label{hb}\\ 
+ &\sum_{(T_1, T_2, T_3) \in \mathrm{RV}_{N;2}^{(2)}/\mathrm{Inv}_{2,3}} \mathrm{sgn}(T_1, T_2, 3) (U(\val(T_1),\val(T_2), \val(T_3)) + U(\val(T_1),\val(\mathrm{Inv}(T_2)), \val(\mathrm{Inv}(T_3)))) \label{2 class} \\
+&\sum_{(T_1, T_2, T_3) \in \mathrm{RV}_{N;2}^{(1)}/\mathrm{Inv}_{2,3}} \mathrm{sgn}(T_1, T_2, 3) U(\val(T_1),\val(T_2), \val(T_3)). \label{1 class} 
\end{align}

Each summand in sum \eqref{2 class}, by Lemma \ref{l inv on 2} is up to sign of the form 
\[
 U(\val(T_1),2^{N+1}+j_2, 2^{N+1}+j_3) + U(\val(T_1),2^{N+1}-j_2, 2^{N+1}-j_3))
\]
which is in $\mathrm{Ker}(L)$ by applying Lemma \ref{l U ker},  line \eqref{U ker} with 
\begin{align*} 
a &= \val(T_1)\\ 
 b &= 2^{N+1}+j_2 \\ 
 c_1 &= 2^{N+1}+j_3 \\ 
  c_2 &= 2^{N+1}-j_3.
\end{align*}

And each summand in sum \eqref{1 class} is up to sign of the form 
\begin{equation} \label{U 1 term}
U(\val(T_1),2^{N+1}, 2^{N+1})
 \end{equation}
 because a triple $(T_1, T_2, T_3)$ that gets mapped to itself under $\mathrm{Inv}_{2,3}$ must have 
 \[
 \val(\mathrm{Inv}(T_i)) = \val(T_i) = 2^{N+1}
 \]
 for $i=2,3$, where we have used Lemma \ref{l inv on 2}. Expression \eqref{U 1 term} is in $\mathrm{Ker}(L)$ by Lemma \ref{l U ker}, line \eqref{U ker s}. 
 
Thus we have proved that 
\begin{equation} \label{err0 N+1 sum mod}
\err(N+1, 0) \equiv \sum_{(T_1, T_2, T_3) \in \mathrm{RV}_{N;2}} \mathrm{sgn}(T_1, T_2, 3)\tilde{h}_{\val(T_2)}   \tilde{h}_{\val(T_1)+\val(T_3)} \mod \mathrm{Ker}(L).
\end{equation}
 The above sum is equal to 
 \begin{align*}
 &\sum_{(T_1, T_2, T_3) \in \mathrm{RV}_{N;2}} \mathrm{sgn}(T_1, T_2, 3)\mathrm{sgn}(\val(T_2)+1)\sum_{j=0}^{\tau(\val(T_2))}  \tilde{h}_{\val(T_1)+\val(T_3) - \tau(\val(T_2)+2j)}\\ 
  =& \sum_{(l, T_1, T_2, T_3), T_i \mathrm{RV}_{N;2}, |l - \val(T_1)-\val(T_3)| \leq \tau(\val(T_2), l\equiv 0 \mod 2)}\mathrm{sgn}(T_1, T_2, 3)\mathrm{sgn}(\val(T_2)+1)\tilde{h}_l \\ 
  = & \sum_{T \in \mathrm{RV}_{N+1; 2}} \mathrm{sgn}(T) \tilde{h}_{\val(T)}
 \end{align*}
 This completes the induction step for equation \eqref{E 0 equiv}.

Now we prove \eqref{E 1 equiv}. Note for any $g \in \tilde{\mrm{CH}}_2$
\[
\tilde{h}_1g = S_{-1}(g)+ S_1(g)
\]
so 
\begin{align*}
\tilde{h}_1\err(N,1) &= S_{-1}(\err(N,1))+ S_1(\err(N,1)) \\ 
& = S_{-2}(\err(N,0))+ \err(N,0)
\end{align*}
by the induction hypothesis. Thus from equation \eqref{1 rec} we must prove
\[
E(N+1) \equiv E(N)^2 S_{-1}(E(N))-S_{-2}(E(N))E(N)S_{-1}(E(N)) \mod \mathrm{Ker}(L).
\]
We express the right side as a sum over triples 
\begin{equation} \label{init o sum}
\sum_{(T_1, T_2, T_3) \in \mathrm{RV}_{N;2}^3}\mathrm{sgn}(T_1, T_2, 3)( \tilde{h}_{\val(T_1)} \tilde{h}_{\val(T_2)}  - \tilde{h}_{\val(T_1)-2}\tilde{h}_{\val(T_2)} ) \tilde{h}_{\val(T_3)-1}.  
\end{equation}
From Lemma \ref{l oe cont}, equation \eqref{o cont}, the sum \eqref{init o sum} is equal to 
\[
\sum_{(T_1, T_2, T_3) \in \mathrm{RV}_{N;2}^3} \mathrm{sgn}(T_1, T_2, 3) (\tilde{h}_{\val(T_2)+\val(T_1)}+\tilde{h}_{\val(T_2)-\val(T_1)})    \tilde{h}_{\val(T_3)-1},
\]
which by Lemma \ref{l U}, equation \eqref{o U},  is equal to 
\begin{equation} \label{U triple o}
\sum_{(T_1, T_2, T_3) \in \mathrm{RV}_{N;2}^3} \mathrm{sgn}(T_1, T_2, 3) \left(\tilde{h}_{\val(T_2)}   \tilde{h}_{\val(T_1)+\val(T_3)-1}+ U(\val(T_2)+1,\val(T_1), \val(T_3))\right). 
\end{equation}
Now we define an involution $\mathrm{Inv}_{1,3}$ on $\mathrm{RV}_{N;2}^3$ by 
\[
\mathrm{Inv}_{1,3}((T_1, T_2, T_3) ) = ( \mathrm{Inv}(T_1),T_2, \mathrm{Inv}(T_3)). 
\]
Let 
\[
\mathrm{RV}_{N;2}^{(i)}/\mathrm{Inv}_{1,3}
\]
denote the set of equivalence class representatives from those equivalence classes of size $i$ for $\mathrm{RV}_{N;2}$ under the action of $\mathrm{Inv}_{1,3}$, where $i=1$ or 2. 
We express the sum \eqref{U triple o} as 
\begin{align}
&\sum_{(T_1, T_2, T_3) \in \mathrm{RV}_{N;2}} \mathrm{sgn}(T_1, T_2, 3) (\tilde{h}_{\val(T_2)}   \tilde{h}_{\val(T_1)+\val(T_3)-1} ) \label{sum class o}\\ 
+ &\sum_{(T_1, T_2, T_3) \in \mathrm{RV}_{N;2}^{(2)}/\mathrm{Inv}_{1,3}} \mathrm{sgn}(T_1, T_2, 3) (U(\val(T_2)+1,\val(T_1), \val(T_3)) + U(\val(T_2)+1,\val(\mathrm{Inv}(T_1)), \val(\mathrm{Inv}(T_3)))) \label{2 class o} \\
+&\sum_{(T_1, T_2, T_3) \in \mathrm{RV}_{N;2}^{(1)}/\mathrm{Inv}_{1,3}} \mathrm{sgn}(T_1, T_2, 3) U(\val(T_2)+1,\val(T_1), \val(T_3)). \label{1 class o} 
\end{align}
By the same reasoning above for the case $\err(N+1,0)$, the sums \eqref{2 class o} and \eqref{1 class o} are in $\mathrm{Ker}(L)$. The same reasoning obtaining $E(N+1)$ also holds, but now yielding $S_{-1}(E(N+1))$ because of $\tilde{h}_{\val(T_1)+\val(T_3)-1}$ in sum \eqref{sum class o} instead of $\tilde{h}_{\val(T_1)+\val(T_3)}$. This completes the induction step for equation \eqref{E 1 equiv}. This completes the proof.
\end{proof} 

\begin{theorem} \label{t partition}
The set $\mathrm{RV}_{n;2}^+$ may be partitioned into a set $\mathrm{Paths}_n$ of central paths $P$ such that: 

i) $\mathrm{rad}(P) \equiv 0 \mod 2 $

ii) if $P(j) = T$, then $\ind(T) = j$ 

iii) if $T \in V(P)$, then $ \mathrm{rad}(P) =   \mathrm{rad}(T)$. 
\end{theorem}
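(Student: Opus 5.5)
We argue by induction on $n$ and build the partition $\mathrm{Paths}_n$ explicitly from the product-path construction $\pi(P_1,P_2,k)$ and Lemma \ref{l cp facts}. For $n=0$, $\mathrm{RV}_{0;2}^+$ is the single tree labeled $2$. Its index is $2-2=0$ and its radius is $0$ by definition, so it forms a centered path of radius $0$ with $P(0)=T$, and i)--iii) hold. Assume $\mathrm{Paths}_N$ has been constructed for $\mathrm{RV}_{N;2}^+$.

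A tree $T\in\mathrm{RV}_{N+1;2}^+$ is a tuple $(j,T_1,T_2,T_3)$. Here $T_1,T_3$ are arbitrary elements of $\mathrm{RV}_{N;2}^+$, and $T_2$ is restricted by $\val(T_2)\ge \max(\mathrm{rad}(T_2)-2^{N+1},0)$. The label $j$ ranges over the even integers with $|j-\val(T_1)-\val(T_3)|\le \tau(\val(T_2))=\val(T_2)$. Since $\val(T_2)\ge 0$, the range-determiner contributes no sign.

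Fix the paths $P_1\ni T_1$ and $P_3\ni T_3$ from $\mathrm{Paths}_N$. By Lemma \ref{l cp facts} iii), $V(P_1)\times V(P_3)$ is the disjoint union of the product paths $\pi(P_1,P_3,k)$. By Lemma \ref{l cp facts} iv), the pair $(T_1,T_3)$ sits at position $\ind(T_1)+\ind(T_3)$ on its product path. By Lemma \ref{l cp facts} ii), that product path has radius $\rho(\mathrm{rad}(P_1),\mathrm{rad}(P_3),\ind(T_1),\ind(T_3))=\mathrm{rad}(T_1,T_3)$, using iii) of the induction hypothesis. This radius is even, by Lemma \ref{l cp facts} i) and the evenness of $\mathrm{rad}(P_i)$.

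Next, for fixed $T_2$ we form a second product: the product path $Q$ just obtained times a centered path $R$ of radius $\tau(\val(T_2))=\val(T_2)$. The vertices of $R$ encode the allowed offsets $j-\val(T_1)-\val(T_3)$, which are the points $-\val(T_2),\ldots,\val(T_2)$ in steps of $2$. The offset parity is automatic because all values are even. Applying Lemma \ref{l cp facts} again, $V(Q)\times V(R)$ splits into product paths. The vertex $(Q(x),R(y))$ lies at position $x+y$, and with $x=\ind(T_1)+\ind(T_3)$ and $y=j-\val(T_1)-\val(T_3)$ this gives $x+y=j-2^{N+2}=\ind(T)$, as required by ii). By Lemma \ref{l cp facts} ii), its radius is $\rho(\mathrm{rad}(T_1,T_3),\val(T_2),x,y)$, which is exactly $\mathrm{rad}(T)$ as defined, giving iii). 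Property i) follows from Lemma \ref{l cp facts} i) together with the evenness of $\val(T_2)$.

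Define $\mathrm{Paths}_{N+1}$ as the collection of all these second-level product paths. We range over all pairs $(P_1,P_3)\in \mathrm{Paths}_N^2$, all $k$, and all admissible $T_2$; the middle tree $T_2$ is carried along unchanged as a label. These paths cover $\mathrm{RV}_{N+1;2}^+$ disjointly, by the two applications of Lemma \ref{l cp facts} iii) and the induction hypothesis.

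The main obstacle is the bookkeeping of coordinate conventions. Lemma \ref{l cp facts} is stated for paths indexed by $-r,\ldots,r$. One must check that the index of $(P_1(x),P_2(y))$ is literally the sum of the indices, and that the $\rho$ formula in the definition of $\mathrm{rad}(T)$ matches Lemma \ref{l cp facts} ii) with the correct orientation: $P_1$ here is $Q$ of radius $\mathrm{rad}(T_1,T_3)$, and $P_2$ is $R$ of radius $\val(T_2)$.

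A further subtlety is where the $\mathrm{RV}^+$ condition $\val(T_2)\ge \mathrm{rad}(T_2)-2^{N+1}$ enters. Condition $\val(T_2)\ge 0$ is clearly what makes $R$ a genuine path with no sign. I would check whether the first inequality is actually needed for the partition itself; I expect it is needed only for positivity downstream, in Corollary \ref{c partition}. If it turns out to be needed, its role would be to ensure that $\tau(\val(T_2))$ is the correct radius. I would verify this by unwinding the $n=1$ case directly before writing the general induction.
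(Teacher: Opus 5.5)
Your proposal is correct and follows essentially the same route as the paper's proof. Both first split $V(P_1)\times V(P_3)$ via Lemma \ref{l cp facts} into product paths on which $(T_1,T_3)$ sits at position $\ind(T_1)+\ind(T_3)$ with radius $\mathrm{rad}(T_1,T_3)$. Both then take a second product with the radius-$\val(T_2)$ path of offsets (the paper's $P_{T_2}$) and read off ii) and iii) from parts iv) and ii) of the lemma.

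Your remaining worry resolves as you expected. The inequality $\val(T_2)\ge \mathrm{rad}(T_2)-2^{N+1}$ only selects which $T_2$ are carried along, and plays no role in the partition; the paper brings it in only to motivate the cancellation used in Corollary \ref{c partition}. Your reading of the last argument of $\rho$ in $\mathrm{rad}(T)$ as the offset $\val(T)-\val(T_1)-\val(T_3)$ is the one the paper's proof implicitly uses.
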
 
\begin{proof} 
We use induction on $n$. The three statements are true for $n=0$ as the set consists of one path with one vertex. Assume they are true for some $N \geq 0$. 
We construct the set of $4$-tuples 
\[
(l, T_1,T_2,T_3)
\]
that constitute $\mathrm{RV}_{n;2}^+$ as the set of pairs 
\[
((l, T_2) , (T_1,T_3))
\]
and show how this product set may be partitioned into a set of centered paths.

By the induction hypothesis, let $\mathrm{Paths}_N$ be the set of centered paths such that 
\[
\mathrm{RV}_{N;2}^+ = \dot \bigcup_{P \in \mathrm{Paths}_N} V(P).
\]

Then 
\begin{align*}
(\mathrm{RV}_{N;2}^+)^2 &= \dot \bigcup_{(P_1,P_2) \in  \mathrm{Paths}_N^2} V(P_1) \times V(P_2) \\ 
\end{align*}
and each
\[
V(P_1) \times V(P_2) = \dot \bigcup_{k=0}^{\min(\mathrm{rad}(P_1), \mathrm{rad}(P_2))} V(\pi(P_1,P_2,k)).
\]
Thus let $ \mathrm{Paths}_N'$ denote 
\[
\mathrm{Paths}_N'=\dot \bigcup_{(P_1,P_2)} \dot \bigcup_{k=0}^{\min(\mathrm{rad}(P_1), \mathrm{rad}(P_2))} \pi(P_1,P_2,k).
\]
Each path $P \in \mathrm{Paths}_N'$ is of even radius because each $P_1$ and $P_2$ has even radius by induction hypothesis. Furthermore, by construction $\rad{P} = \rad(T_1,T_3)$. Suppose 
\begin{align*}
P(j) &= (P_1(x),P_2(y)) \\ 
      & = (T_1,T_3).
\end{align*}
By the induction hypothesis $x = \ind(T_1), y = \ind(T_3)$, and by Lemma \ref{l cp facts}, iv),
\[
j = x+ y
\] 
so 
\[
j = \ind(T_1) + \ind(T_3).
\]
Now for $T_2 \in  \mathrm{RV}_{N;2}^+$, by induction hypothesis $T_2$ is in exactly one path $P \in \mathrm{Paths}_N$. If $T_2 = P(-j)$ with 
\[
\val(T_2) = 2^{N+1}-j <0,
\]
then we must have $\rad(T_2)\geq j$, so we can pair this path vertex with $P(-2^{N+2}+j-2)$, which we denote by $\iota(T_2)$, which satisfies
\[
\val(P(-2^{N+2}+j-2)) = -2^{N+1}+j-2.
\]
In the sum for $E(N+1)$, the contributions from 
\[
(l, T_1,P(-j), T_3) \,\,\,\,\,\,\text{ and } (l, T_1,P(-2^{N+2}+j-2), T_3)
\]
have opposite sign and thus cancel each other. $P(-j)$ corresponds to the element $\tilde{h}_{\val(T_2)}$ in the sum \eqref{hb} with $b = 2^{N+1}-j$, and with 
\[
\tilde{h}_{2^{N+1}-j}\tilde{h}_{a+c} = - \tilde{h}_{-2^{N+1}+j-2} \tilde{h}_{a+c}. 
\]
Omitting these pairs from $\mrm{RV}_{N;2}^+$ results in the set 
\[
\mrm{RV}_{N;2}' = \{T_2  \in \mrm{RV}_{N;2}^+ \colon  \val(T_2) \geq \max(\mathrm{rad}(T_2) - 2^n,0)\}.
\]
Such $T_2$ have 
\[
\mrm{sgn}(\val(T_2)+1) =1
\]
by construction. 

Now for each $T_2 \in \mrm{RV}_{N;2}'$ we construct the centered path $P_{T_2}$
\[
P_{T_2}(-\val(T_2)+2j) \rightarrow P_{T_2}(-\val(T_2)+2j+2) 
\]
for $0 \leq j \leq \val(T_2) - 1$ where the vertex  $P_{T_2}(j) $ is the ordered pair 
\[
(j, T_2).
\] 
B Lemma \ref{l cp facts} iii), we may partition 
\[
V(\pi(P_1, P_2, k))   \times V(P_{T_2})
\]
into a set of centered paths $\mrm{Paths}_{N+1}$. A vertex of $P \in \mrm{Paths}_{N+1}$ is of the form 
\begin{equation} \label{2 pair}
P(l) = ((T_1,T_3), (j, T_2)) 
\end{equation} 
Thus we write \eqref{2 pair} which we write as a 4-tuple 
\[
 (j,T_1,T_2,T_3) \in \mrm{RV}_{N;2}^+,
\]
and taking the union over all such $T_2, P_1, P_2, k$ yields $\mrm{RV}_{N;2}^+$. 
By Lemma \ref{l cp facts}, iv),  the index $l$ of $P(l)$ is 
\[
l = \ind(T_1)+\ind(T_3) + j
\]
and 
\begin{align*}
\ind( (j,T_1,T_2,T_3)) &= \val(T_1) + \val(T_3)+ j -2^{N+2} \\ 
& = 2^{N+1}+ \ind(T_1) + 2^{N+1}+ \ind(T_3)+ j - 2^{N+2} 
&= \ind(T_1)+\ind(T_3) + j. 
\end{align*}
This proves statement $ii$. 
Also by Lemma \ref{l cp facts}, ii) 
\[
\rad(P) = \rho(\rad(T_1,T_3), \val(T_2),\val(T_1)+\val(T_3),j)
\]
which is the definition of 
\[
\rad( (j,T_1,T_2,T_3)).
\]
This proves statement $iii$. 

Each path  $P \in \mrm{Paths}_{N+1}$ has even radius by Lemma \ref{l cp facts}, i, because $P$ is built from the product paths which by the induction hypothesis have even radius. This proves statement $i$.  
This completes the proof. 
\end{proof}

\begin{corollary} \label{c partition}
\begin{align}
E(n) &= \sum_{T \in \mathrm{RV}_{n;2}^+} \tilde{h}_{\val(T)}\label{E sum +}\\
&= \sum_{k=2^{n} - r(n))}^{ 2^n + r(n)}  c_{2k}\tilde{h}_{2k}
\end{align}
where $r(n) = 3^n - 2^n$. The $c_{2k}$ satisfy for $i\geq 0$    

i) $c_{2k} > 0$

ii) $c_{2^{n+1}-2i} = c_{2^{n+1}+2i}$ 

iii)  $c_{2^{n+1}-2i}\geq c_{2^{n+1}-2i-2}$. 

Furthermore 
\begin{equation} \label{sum prime}
L(E(n)) = \sum_{T \in  \mrm{RV}_{N;2}'} h_{\val(T)}.
\end{equation}
\end{corollary}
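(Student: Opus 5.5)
The plan is to read off the corollary from Theorem \ref{t E RV} and Theorem \ref{t partition}, together with the cancellation built into the proof of Theorem \ref{t partition}. The identity \eqref{E sum +} is meant modulo $\mathrm{Ker}(L)$, which is the only sense in which Theorem \ref{t E RV} determines $E(n)$.

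\textbf{Step 1: reduce the sum to $\mathrm{RV}_{n;2}^+$.} Argue by induction on $n$, following the tree recursion. For a tree $T=(l,T_1,T_2,T_3)$ whose range-determiner $T_2$ has $\val(T_2)<0$, use the pairing $T_2\leftrightarrow \iota(T_2)$ from the proof of Theorem \ref{t partition}. Combined with the relation $\tilde h_{i}\tilde h_j=-\tilde h_{\tau(i)}\tilde h_j$, this pairing cancels contributions of opposite sign. The same relation shows that the negative-value terms cancel against positive terms. What survives are trees whose every internal range-determiner satisfies $\val(T_2)\ge \max(\rad(T_2)-2^n,0)$, i.e.\ trees in $\mathrm{RV}_{n;2}^+$. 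For these trees every factor $\mathrm{sgn}(\val(T_2)+1)$ equals $1$, so $\mathrm{sgn}(T)=1$. This gives \eqref{E sum +}.

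\textbf{Step 2: range of values and the coefficients.} By Lemma \ref{l u3 deg} and Lemma \ref{l mlcre}, or directly by induction on the admissible ranges $|l-\val(T_1)-\val(T_3)|\le\tau(\val(T_2))$, the values of trees lie in $[2^{n+1}-2r(n),\,2^{n+1}+2r(n)]$ and are even. Define $c_{2k}=\#\{T\in\mathrm{RV}^+_{n;2}:\val(T)=2k\}$. Then apply Theorem \ref{t partition}. Each path $P\in\mathrm{Paths}_n$ has even radius and vertices $P(j)$ with $\ind(P(j))=j$ for $j=-\rad(P),-\rad(P)+2,\dots,\rad(P)$. So $P$ contributes exactly $1$ to $c_{2^{n+1}+j}$ for each such $j$. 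Summing over paths, each coefficient is a sum of indicator functions of symmetric intervals centered at $2^{n+1}$. This gives (ii) immediately. It also gives (iii), since a symmetric interval containing a point farther from the center also contains every point closer to it.

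For (i), the extreme value must be attained. Exhibit a path of maximal radius $2r(n)$; the tree with all range-determiners at maximal value and the offset $j$ maximal at each level is a candidate. Then unimodality together with this one path gives $c_{2k}>0$ throughout the stated range.

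\textbf{Step 3: apply $L$.} We have $L(\tilde h_{2k})=h_{2k}$ for $2k\ge0$. Passing to $\mathrm{RV}'$ means additionally discarding trees whose top value is negative. These trees pair with trees of value $-2-(\text{value})$ via $\tilde h_{-i}+\tilde h_{i-2}\in\mathrm{Ker}(L)$. This follows by the same mechanism as in Step 1, applied at the root rather than at the range-determiner.

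The main obstacle is Step 1. One must check that the pairing $\iota$ is a well-defined, sign-reversing bijection compatible with the constraint on $l$, so that the surviving set is exactly $\mathrm{RV}^+$. The first thing to verify is that $\rad(T_2)\ge j$ whenever $\val(T_2)<0$, using property (iii) of Theorem \ref{t partition}.
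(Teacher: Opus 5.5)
Your route is the paper's. You cancel trees using the pairing $T_2\leftrightarrow\iota(T_2)$ on range-determiner subtrees. You read (ii) and (iii) off the centered paths of Theorem \ref{t partition}, get (i) from the path through the maximal-value tree, and get \eqref{sum prime} from $\tilde{h}_{-i}+\tilde{h}_{i-2}\in\mathrm{Ker}(L)$.

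\textbf{The gap: reading \eqref{E sum +} only modulo $\mathrm{Ker}(L)$.} $E(n)$ is not determined by Theorem \ref{t E RV}. It is defined outright as $\sum_{T\in\mathrm{RV}_{n;2}}\mathrm{sgn}(T)\tilde{h}_{\val(T)}$, and the corollary asserts an exact equality in $\tilde{\mrm{CH}}_2$. Otherwise ``the coefficients $c_{2k}$ of $E(n)$'' would not be well defined, since one could add $\tilde{h}_{-2}+\tilde{h}_0$.

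More seriously, a congruence mod $\mathrm{Ker}(L)$ cannot serve as the inductive hypothesis in your Step 1. Expanding at the root gives
\[
E(N+1)=\sum_{(T_1,T_2,T_3)\in\mathrm{RV}_{N;2}^3}\mathrm{sgn}(T_1)\mathrm{sgn}(T_2)\mathrm{sgn}(T_3)\,\tilde{h}_{\val(T_2)}\tilde{h}_{\val(T_1)+\val(T_3)}.
\]
The $T_2$ slot tolerates kernel errors by Lemma \ref{l T g action}. But $T_1$ and $T_3$ enter only through the index $\val(T_1)+\val(T_3)$. Perturbing the $T_1$-sum by $\tilde{h}_{-i}+\tilde{h}_{i-2}$ changes $E(N+1)$ by terms $\tilde{h}_b(\tilde{h}_{c-i}+\tilde{h}_{c+i-2})$. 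These are generally not in $\mathrm{Ker}(L)$; for $b=0$ this is in the kernel only when $c=0$. So a level-$N$ congruence says nothing about level $N+1$.

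\textbf{The repair.} Prove, and carry through the induction, the exact statement: for every $v$,
\[
\sum_{T\in\mathrm{RV}_{N;2},\,\val(T)=v}\mathrm{sgn}(T)=\#\{T\in\mathrm{RV}_{N;2}^+:\val(T)=v\}.
\]
Your pairing yields exactly this.
\begin{itemize}
\item $T_2$ and $\iota(T_2)$ both lie in $\mathrm{RV}_{N;2}^+$, on one path of $\mathrm{Paths}_N$, so both have sign $1$.
\item They have equal $\tau(\val)$ and opposite $\mathrm{sgn}(\val+1)$.
\item Swapping them therefore keeps $l$ admissible, leaves $\val(T)$ unchanged, and flips $\mathrm{sgn}(T)$.
\end{itemize}
So the cancellation is exact term by term, with no ring relation needed. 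In product form, $(\tilde{h}_b+\tilde{h}_{-b-2})\tilde{h}_{a+c}=0$ holds exactly by Lemma \ref{l T g action}, not merely modulo the kernel.

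With this exact hypothesis, your level-by-level induction is a valid alternative to the paper's device. The paper instead flips the leftmost minimal-height subtree not in $\mathrm{RV}^+$, which gives a global involution on $\mathrm{RV}_{n;2}-\mathrm{RV}_{n;2}^+$. Your version avoids checking that this choice is stable under the flip.

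\textbf{A smaller point for (i).} You should verify that the maximal tree lies in $\mathrm{RV}_{n;2}^+$. Each of its range-determiners of height $k$ has value $2\cdot 3^{k}\ge 0$. Its radius is at most $2r(k)$, because path vertices have values in $[2^{k+1}-\rad,\,2^{k+1}+\rad]$ and no value exceeds $2\cdot 3^k$. Hence $\val\ge\rad-2^{k+1}$.
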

\begin{proof}
From the proof of Theorem \ref{t partition}, we construct a sign-reversing involution on the set
\[
W_n= \mathrm{RV}_{n;2} -  \mathrm{RV}_{n;2}^+.
\] 
For $T \in W_n$, let 
\[
(l , T_1,T_2,T_3)
\]
be the leftmost subtree of $T$ of height $k$ such that 
\[
(l , T_1,T_2,T_3) \notin \mathrm{RV}_{k;2}^+
\]
with $k$ minimal. Then let $T' \in W_n$ be the tree obtained from $T$ by replacing its subtree $(l , T_1,T_2,T_3)$ with 
\[
(l , T_1,\iota(T_2),T_3).
\]
This is a valid radius-value tree, i.e. $l$ is in the proper range, because 
\[
\tau(\val(\iota(T_2))) = \tau(\val(T_2))
\] 
by construction, and since exactly one of $\{ \val(T_2)+1, \val(\iota(T_2))+1\} $ is negative, we have 
\[
\mrm{sgn}(T_2) = - \mrm{sgn}(\iota(T_2)). 
\]
For $T \in \mathrm{RV}_{n;2}^+$, every subtree $(l , T_1,T_2,T_3)$ of $T$ therefore has 
\[
\val(T_2)+1 \geq 0,
\]
so $\mrm{sgn}(T) =1$. This proves equation \eqref{E sum +}. 

Theorem \ref{t partition}, ii, implies that the central vertex of each centered path in $\mrm{Paths}_n$ has value $2^{n+1}$. The maximum value that a tree in $\mathrm{RV}_{n;2}$ attains is $2(3^n)$ because this maximum at height $n$ is 3 times the maximum at height $n-1$, as the radius-value trees are ternary. Such a tree is in $\mathrm{RV}_{n;2}^+$ because by choosing the maximum values at each subtree $(l , T_1,\iota(T_2),T_3)$, there can be no $T_2$ with 
\[
\val(T_2) +1 <0. 
\]
 Statements $i, ii, iii$ follow immediately from the result that  $\mathrm{RV}_{n;2}^+$ can be partitioned into centered paths, each of which has central vertex which is a tree with value $2^{n+1}$. 
 
 The involution $\iota$ corresponds to pairing 
 \[
 L({\tilde{h}_{-i}} )= -  L({\tilde{h}_{i-2}}). 
 \]
 This proves equation \eqref{sum prime}. This completes the proof. 
\end{proof}

\end{document}